\newcommand{\C}{\mathbb{C}}
\newcommand{\R}{\mathbb{R}}
\theoremstyle{plain}
\newtheorem{theorem}{Theorem}[section]
\newtheorem{proposition}[theorem]{Proposition}
\newtheorem{lemma}[theorem]{Lemma}
\newtheorem{Corollary}[theorem]{Corollary}
\theoremstyle{definition}
\newtheorem{definition}[theorem]{Definition}
\newcounter{constantLABEL}
\newcommand{\cref}[1]{C_{\ref{#1}}}
\newcounter{constantslabel}
\begin{document}


\title{
A removal singularity theorem of the 
 Donaldson--Thomas instanton on compact K\"{a}hler threefolds 
}
 

\author{Yuuji Tanaka}
\date{}


\maketitle


\begin{abstract}
We consider a perturbed Hermitian--Einstein equation, 
which we call the Donaldson--Thomas equation, on compact
 K\"{a}hler threefolds. 
In \cite{tanaka2}, we analysed some analytic properties of solutions  to the
 equation, in particular, we proved that a sequence of solutions to the
 Donaldson--Thomas equation has a subsequence which smoothly converges
 to a solution to the Donaldson--Thomas equation outside a closed 
 subset of the Hausdorff dimension two.  
In this article, we prove that some of  these singularities can be removed. 
\end{abstract}




\markboth{}
{A removal singularity theorem for Donaldson--Thomas instantons}


\section{Introduction}

In \cite{tanaka1}, we introduced a perturbed Hermitian--Einstein
equation on symplectic 6-manifolds  
in order to analytically approach the Donaldson--Thomas invariants 
developed in \cite{Thomas00}, \cite{JS}, \cite{KS}, and \cite{KS2}. 
In \cite{tanaka1}, we described the local structures of the moduli space of
the Donaldson--Thomas instantons, 
and the moment map description of the moduli space. 
Subsequently, in \cite{tanaka2}, we proved a weak convergence
theorem of the Donaldson--Thomas instantons on compact K\"{a}hler
threefolds. 
This article is a sequel of \cite{tanaka2}.   
We prove that some of singularities 
which appeared in \cite{tanaka2} are removable.

Firstly, let us introduce the equations.    
Let $Z$ be a compact K\"{a}hler threefold with K\"{a}hler form
$\omega$, and let $E$ be a unitary vector bundle over $Z$ of rank $r$. 
A complex structure on $Z$ gives the splitting of 
the space of the complexified two forms as $
 \Lambda^2 \otimes \C = \Lambda^{1,1} \oplus \Lambda^{2,0} 
\oplus \Lambda^{0,2}$, 
and $\Lambda^{1,1}$ further decomposes into $ \C \langle \omega \rangle \oplus \Lambda_{0}^{1,1}$. 
We consider the following equations for a connection $A$ of $E$, 
 and an $\mathfrak{u} (E)$-valued (0,3)-form $u$ on $Z$.  
\begin{gather}
F_{A}^{0,2} =0 , \quad   \bar{\partial}_{A}^{*} u = 0, 
\label{DT1}
\\
F_{A}^{1 ,1} \wedge \omega^{2} + [ u , \bar{u}] 
+  i \frac{\lambda(E)}{3} Id_{E} \omega^3 = 0 ,
\label{DT2}
\end{gather}
where $\lambda (E)$ is a constant defined by 
$\lambda (E) := 
6 \pi ( c_1(E) \cdot [\omega]^{2} ) / r [\omega]^{3}$. 
We call these equations the {\it Donaldson--Thomas
equations}, and a solution $(A, u)$ to these equations a  
{\it Donaldson--Thomas
instanton} (or a {\it D--T instanton} for short).

In \cite{tanaka2}, 
we proved the following weak convergence theorem for the
Donaldson--Thomas instantons on compact K\"{a}hler threefolds.

\begin{theorem}[\cite{tanaka2}]
Let $Z$ be a compact K\"{a}hler threefold, and let $E$ be a 
unitary vector bundle over
 $Z$.  
Let $\{ (A_n , u_n) \}$ be a sequence of D--T instantons of E. 
We assume that  $\int_{Z} |  u_{n} |^2 dV_{g}$ are uniformly bounded. 
Then there exist a subsequence $\{ (A_{n_{j}} , u_{n_{j}}) \}$ of $\{ (A_n , u_n) \}$,  
a closed subset $S$ of $Z$ whose real two-dimensional Hausdorff
 measure is finite, and a sequence of gauge transformations 
$\{ \sigma_{j} \}$over $Z \setminus S$
 such that 
$\{ \sigma_{j}^{*} (A_{n_{j}} , u_{n_{j}}) \}$ smoothly converges to a
 D--T instanton over 
$Z \setminus S$. 
\label{th:weak}
\end{theorem}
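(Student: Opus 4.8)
The plan is to prove this as an Uhlenbeck-type weak compactness theorem for the coupled first-order system \eqref{DT1}--\eqref{DT2} on the real six-manifold $Z$: first establish uniform a priori bounds, then isolate the set $S$ along which the curvature energy concentrates, then derive uniform local smooth bounds on $Z\setminus S$ via an $\varepsilon$-regularity theorem, and finally patch the resulting local Coulomb gauges into the global gauge transformations $\sigma_j$ on $Z\setminus S$.

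\textbf{Step 1 (a priori bounds).} First I would bound $\|F_{A_n}\|_{L^2(Z)}$ uniformly. By \eqref{DT1} the curvature is of type $(1,1)$ (indeed $F_{A_n}^{2,0}=-(F_{A_n}^{0,2})^{*}=0$); splitting $F_{A_n}^{1,1}=(F_{A_n}^{1,1})_0+\tfrac13(\Lambda F_{A_n}^{1,1})\,\omega$ into primitive and trace parts, equation \eqref{DT2} gives $\tfrac13(\Lambda F_{A_n}^{1,1})\,\omega^{3}=-[u_n,\bar u_n]-i\tfrac{\lambda(E)}{3}\,\omega^{3}\,Id_E$, so $|\Lambda F_{A_n}^{1,1}|\le c(|u_n|^{2}+1)$ pointwise. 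On the other hand, Chern--Weil theory expresses $\int_Z\big(|(F_{A_n}^{1,1})_0|^{2}-c\,|\Lambda F_{A_n}^{1,1}|^{2}\big)\,dV_g$ as a fixed linear combination of $\int_Z \mathrm{tr}(F_{A_n}\wedge F_{A_n})\wedge\omega$ and $\int_Z \mathrm{tr}(F_{A_n})\wedge \mathrm{tr}(F_{A_n})\wedge\omega$, both determined by $c_1(E),c_2(E),[\omega]$; hence $\|F_{A_n}\|_{L^2(Z)}^{2}\le c(\|u_n\|_{L^4(Z)}^{4}+1)$. It remains to control $u_n$: being a $(0,3)$-form, $\bar\partial_{A_n}u_n=0$ automatically, and $\bar\partial_{A_n}^{*}u_n=0$ by \eqref{DT1}, so the Bochner--Kodaira--Nakano identity gives $\tfrac12\Delta|u_n|^{2}+|\nabla_{A_n}u_n|^{2}=-\langle\mathcal R(F_{A_n},\mathrm{Riem}_Z)\,u_n,u_n\rangle$; substituting $\Lambda F_{A_n}^{1,1}$ from \eqref{DT2} converts the leading curvature term into $\mp c\,|[u_n,\bar u_n]|^{2}$ plus terms linear in the primitive curvature and quadratic in $u_n$, and after absorbing the primitive-curvature term one obtains (provided the sign of the quartic term is favourable) $\tfrac12\Delta|u_n|^{2}\le c\,|u_n|^{2}-c'|u_n|^{4}+c''$, so the maximum principle yields $\|u_n\|_{L^{\infty}(Z)}\le c$. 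Feeding this back, $\|F_{A_n}\|_{L^2(Z)}\le c$ and $\int_Z|\nabla_{A_n}u_n|^{2}\le c$. Set $e_n:=|F_{A_n}|^{2}+|\nabla_{A_n}u_n|^{2}$, so $\int_Z e_n\,dV_g\le K$ with $K$ independent of $n$.

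\textbf{Step 2 (monotonicity, $\varepsilon$-regularity, singular set).} Following Price's monotonicity computation, and crucially using the K\"{a}hler condition $d\omega=0$ (which plays the role of a closed calibration for the first-order system) together with Step 1, I would establish: there are $c,\rho_0>0$ with $r\mapsto e^{cr}\,r^{-2}\!\int_{B_r(x)}e_n\,dV_g$ non-decreasing on $(0,\rho_0)$ for every $x$ (the weight $r^{-2}=r^{4-6}$ is forced by the conformal scaling of the Yang--Mills energy in real dimension six, and the contributions of $[u_n,\bar u_n]$ and of the lower-order and curvature-of-$Z$ terms, together with the bounded field $u_n$, are absorbed into the exponential factor). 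This yields an $\varepsilon$-regularity theorem: there are $\varepsilon_0>0$, $c>0$ such that $r^{-2}\!\int_{B_{2r}(x)}e_n\,dV_g<\varepsilon_0$ (with $2r<\rho_0$) implies, in a suitable gauge on $B_r(x)$, the bound $\sup_{B_{r/2}(x)}\big(|F_{A_n}|+|\nabla_{A_n}u_n|\big)\le c\,r^{-3}\big(\!\int_{B_{2r}(x)}e_n\,dV_g\big)^{1/2}$ together with bounds on all higher covariant derivatives. After passing to a subsequence, still written $\{(A_{n_j},u_{n_j})\}$, the finite measures $e_{n_j}\,dV_g$ converge weakly to a Radon measure $\mu$ on $Z$ with $\mu(Z)\le K$, and one sets
\[
S:=\bigcap_{0<r<\rho_0}\big\{\,x\in Z\ :\ \mu(\overline{B_r(x)})\ge \varepsilon_0\,r^{2}\,\big\}.
\]
Then $S$ is closed and is contained in the set of points of $\mu$ of positive lower $2$-density, so the standard density estimate for Radon measures gives $\mathcal H^{2}(S)\le c\,K/\varepsilon_0<\infty$; in particular $S$ has real Hausdorff dimension at most two.

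\textbf{Step 3 (local regularity, gauge fixing, patching).} If $x\notin S$, then $\mu(\overline{B_r(x)})<\varepsilon_0 r^{2}$ for some $r<\rho_0$, hence $\int_{B_r(x)}e_{n_j}\,dV_g<\varepsilon_0 r^{2}$ for all large $j$, and Step 2 bounds $F_{A_{n_j}}$ and $\nabla_{A_{n_j}}u_{n_j}$, with all derivatives, uniformly on $B_{r/2}(x)$. Uhlenbeck's gauge-fixing theorem then puts $A_{n_j}$ into Coulomb gauge on $B_{r/4}(x)$ with $\|A_{n_j}\|_{W^{1,p}}$ uniformly bounded for some $p>6$, and the elliptic system formed by the Coulomb condition, the integrability $F_{A_{n_j}}^{0,2}=0$, equation \eqref{DT2}, and $\bar\partial_{A_{n_j}}^{*}u_{n_j}=0$ is bootstrapped by elliptic regularity to uniform $C^{k}$ bounds on $B_{r/8}(x)$ for every $k$. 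Covering $Z\setminus S$ by such balls, running the standard patching argument for Uhlenbeck-type limits over an exhaustion $\Omega_1\Subset\Omega_2\Subset\cdots$ of $Z\setminus S$ by relatively compact open sets, and taking a diagonal subsequence, produces gauge transformations $\sigma_j$ on $Z\setminus S$ and a smooth pair $(A_\infty,u_\infty)$ with $\sigma_j^{*}(A_{n_j},u_{n_j})\to(A_\infty,u_\infty)$ in $C^{\infty}_{\mathrm{loc}}(Z\setminus S)$; since the convergence is smooth, $(A_\infty,u_\infty)$ again solves \eqref{DT1}--\eqref{DT2} on $Z\setminus S$, completing the proof.

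\textbf{Main obstacle.} I expect the core difficulty to be Step 2: establishing a genuine monotonicity formula and the attendant $\varepsilon$-regularity for the \emph{coupled} system, verifying that the cross-terms generated by $[u_n,\bar u_n]$ in \eqref{DT2} and by $|\nabla_{A_n}u_n|^{2}$ enter the monotonicity identity only as controllable lower-order terms (this is where the $L^{\infty}$-bound on $u_n$ is essential), and that the blow-up analysis for the system still forces the energy to concentrate along a set of the expected dimension. The other delicate point is the a priori $L^{\infty}$-estimate on $u_n$ in Step 1: it depends on the sign of the curvature term in the Bochner identity after \eqref{DT2} is substituted, and on handling curvature contributions that are only $L^{2}$-controlled until the estimate itself is in hand, so I would prove that bound first, by the maximum principle, and use it throughout.
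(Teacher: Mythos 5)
Your proposal follows the standard Uhlenbeck compactness scheme (uniform energy bounds, monotonicity, $\varepsilon$-regularity, concentration set, local Coulomb gauges and patching), which is indeed the framework underlying the cited proof, but it deviates from the paper's actual route in two technically significant ways. First, the quantity for which the paper (via \cite{tanaka2}, see Propositions \ref{th:monotone} and \ref{th:est} recalled in Section 2) establishes monotonicity is $m(A,u) = |F_{A}^{\perp}|^{2} - |[u,\bar u]|^{2}$, not your density $e_{n} = |F_{A_{n}}|^{2} + |\nabla_{A_{n}}u_{n}|^{2}$. This specific combination is tailored to the structure of \eqref{DT2}, which identifies $[u,\bar u]$ with the $\omega$-trace of the curvature up to a constant, and it is for $m(A,u)$ that the monotonicity formula \eqref{eq:monotone} holds. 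The singular set $S$ in the paper is correspondingly defined via $\liminf_{n}\, r^{-2}\int_{B_r} m(A_n,u_n)$. It is far from clear that your $e_n$ satisfies an analogous monotonicity: $|\nabla_{A_n}u_n|^{2}$ has a different conformal weight than $|F_{A_n}|^{2}$ in real dimension six, and its inclusion would likely obstruct the Price-type computation. So Step 2, which you rightly flag as the core difficulty, is the place where your sketch diverges from what actually works.

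Second, your Step 1 attempts an unconditional $L^{\infty}$ bound on $u_n$ via a Bochner identity and the maximum principle, hinging on a favorable sign of the quartic term and on absorbing curvature contributions that are only $L^{2}$-controlled; you yourself concede this is uncertain. Notice that your argument never invokes the stated hypothesis that $\int_{Z}|u_n|^{2}\,dV_g$ is uniformly bounded, which should be a warning sign — if an absolute $L^{\infty}$ bound held, that hypothesis would be vacuous. The paper instead proves the relative estimate $\|u\|_{L^{\infty}} \leq C\|u\|_{L^{2}}$ (Proposition \ref{estu}, a Moser-iteration-type statement with no sign delicacy) and then feeds in the hypothesis to get the uniform $L^{\infty}$ bound, which in turn closes the Chern--Weil bound on $\|F_{A_n}\|_{L^{2}}$. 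Adopting this cleaner path removes the sign issue entirely. Your Step 3 (Uhlenbeck gauge fixing, bootstrap, exhaustion and diagonal argument) is standard and correct.
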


This leads to introducing the following notion of an \textit{admissible D--T
instanton}, as in \cite{Tian00}. 
\begin{definition}
A smooth D--T instanton $(A,u)$ defined outside a closed subset 
$S$ in $Z$ is called an {\it admissible D--T instanton} if 
$\mathcal{H}^2 (S \cap K) < \infty$ for any compact subset $K \subset Z$,  
and $\int_{Z \setminus S} |F_{A}|^2 \, dV_{g} < \infty$.  
\end{definition}

In this article, we prove the following.  
\begin{theorem}
Let $B_{r} (0) \subset \C^3$ be a ball of radius $r$ centred at the origin with
 K\"{a}hler metric $g$. We assume that the metric is compatible with the
 standard metric by a constant $\Lambda$. 
Let $E$ be a vector bundle over $B_{r} (0)$,  
and let $(A, u)$ be an admissible D--T instanton of $E$  
with $\int_{B_{r} (0)} 
|u|^2 dV < \infty $. 
Then, there exists a constant $\varepsilon > 0$ such that 
if $ \frac{1}{r^2} \int_{B_{r}(0)} |F_{A}|^2  dV_{g} \leq \varepsilon$, 
there exists a smooth gauge transformation $\sigma$ on 
$B_{\frac{r}{2}}(0) \setminus S$ such that $\sigma (A,u)$ 
smoothly extends over $B_{\frac{r}{2}} (0)$
\label{ith:remov}
\end{theorem}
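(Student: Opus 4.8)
The plan is to transplant the removable-singularity machinery developed for Hermitian--Einstein connections, as in \cite{Tian00}, to the perturbed system \eqref{DT1}--\eqref{DT2}. The two analytic pillars are a monotonicity formula for a scale-invariant energy built from $F_A$ and $u$, and an $\varepsilon$-regularity estimate; granted these, bounded curvature near $S$ follows, and the extension of $(A,u)$ across $S$ is obtained from Uhlenbeck's Coulomb gauge together with elliptic bootstrapping of \eqref{DT1}--\eqref{DT2}. Throughout, the comparability of $g$ with the Euclidean metric up to the constant $\Lambda$ is what makes all the error terms (Ricci curvature, metric deviation in the Bianchi identity, Sobolev and Morrey constants) uniform on $B_r(0)$.

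\textbf{Step 1: monotonicity.} I would first record that $(A,u)$ satisfies a coupled second-order elliptic system: taking $\bar\partial_A,\partial_A$ of \eqref{DT1}--\eqref{DT2} and using the Bianchi identity shows that $d_A^{*}F_A$ is algebraic in $u$ and $\nabla_A u$, while, since $u$ is a $(0,3)$-form on a complex threefold, $\bar\partial_A u=0$ automatically, so together with $\bar\partial_A^{*}u=0$ the field $u$ is $\bar\partial_A$-harmonic and obeys a Bochner identity $\nabla_A^{*}\nabla_A u=\{F_A\ast u\}+\{\mathrm{Ric}\ast u\}$. Running Price's argument on this system should give constants $C,C'$ such that
\[
\rho \longmapsto e^{C\rho}\,\frac{1}{\rho^{2}}\int_{B_{\rho}(x)}\bigl(|F_A|^{2}+|\nabla_A u|^{2}\bigr)\,dV_g\;+\;C'\rho^{2}
\]
is non-decreasing for every $x\in B_{3r/4}(0)$ and $0<\rho\le r/4$. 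Combined with $\int_{B_r(0)}|u|^{2}<\infty$, the hypothesis $\tfrac1{r^{2}}\int_{B_r(0)}|F_A|^{2}\le\varepsilon$ then propagates to $\tfrac1{\rho^{2}}\int_{B_\rho(x)}|F_A|^{2}\le C''(\varepsilon+r^{2})$ for all such $x$ and $\rho$.

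\textbf{Step 2: $\varepsilon$-regularity --- the main obstacle.} The Weitzenb\"ock formulae for the coupled system have the schematic shape
\[
\Delta|F_A|^{2}\ge -C\bigl(|F_A|^{3}+|F_A|\,|u|^{2}+|F_A|\bigr),\qquad \Delta|u|^{2}\ge -C\bigl(|F_A|+|u|^{2}+1\bigr)|u|^{2},
\]
and the work is to promote the smallness of the scale-invariant curvature energy to an $L^{\infty}$ bound on $|F_A|$ and $|\nabla_A u|$. The new feature beyond the pure Yang--Mills case is that $u$ must be controlled: from \eqref{DT2}, $|\Lambda F_A^{1,1}|\le C(|u|^{2}+1)$, so $\int_{B_\rho(x)}|u|^{4}\le C\bigl(\int_{B_\rho(x)}|F_A|^{2}+\rho^{6}\bigr)$, whence smallness of the curvature Morrey norm transfers to smallness of the Morrey norm of $|u|^{2}$. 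With all coefficients of small Morrey norm, a Moser iteration (via the Morrey--Adams embedding, whose constants are controlled by $\Lambda$) should yield, for $\varepsilon$ small,
\[
\sup_{B_{\rho/2}(x)}\bigl(|F_A|+|\nabla_A u|\bigr)\le \frac{C}{\rho^{2}}\Bigl(\frac1{\rho^{2}}\int_{B_\rho(x)}|F_A|^{2}\,dV_g+\int_{B_\rho(x)}|u|^{2}\,dV_g\Bigr)^{1/2}.
\]
Together with Step 1 this gives $|F_A(x)|\le C\sqrt{\varepsilon}\,\mathrm{dist}(x,S)^{-2}$ on $B_{3r/4}(0)\setminus S$; choosing $\varepsilon$ below the density gap forced by $\varepsilon$-regularity should rule out any blow-up point of the curvature, so that $|F_A|$ and $|\nabla_A u|$ are in fact bounded on $B_{3r/4}(0)\setminus S$.

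\textbf{Step 3: extension across $S$.} Since $\mathcal{H}^{2}(S\cap B_{3r/4}(0))<\infty$, the set $S$ has real codimension at least four; in particular $B_{3r/4}(0)\setminus S$ is simply connected, and $F_A$ being bounded, $\|F_A\|_{L^{3}(B_\delta(x))}$ is small on small balls near $S$. Uhlenbeck's Coulomb gauge theorem then furnishes local gauges near $S$ in which $A\in W^{1,p}$ for every $p$; patching these and using that a $W^{1,p}$ connection form extends across a set of vanishing $p$-capacity (here $S$, of codimension $\ge 4$) produces, after a gauge transformation $\sigma$ on $B_{r/2}(0)\setminus S$, an extension of $A$ over $B_{r/2}(0)$ of class $W^{1,p}$. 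Elliptic bootstrapping of \eqref{DT1}--\eqref{DT2} in this gauge upgrades $A$ to a smooth connection, and then $u$, which lies in $L^{2}$ and solves the Dirac-type equation $(\bar\partial_A+\bar\partial_A^{*})u=0$ with $A$ now smooth, is smooth by elliptic regularity. Hence $\sigma(A,u)$ extends to a smooth D--T instanton over $B_{r/2}(0)$. I expect the delicate points to be the bookkeeping of the $u$- and $\lambda(E)$-dependent error terms in Steps 1 and 2, and verifying that the smallness of $\int_{B_r(0)}|F_A|^{2}$ indeed makes the Morrey norms of all coefficients small enough for the iteration in Step 2 to close.
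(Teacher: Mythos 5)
Your overall scaffolding (monotonicity, $\varepsilon$-regularity, Uhlenbeck Coulomb gauge, bootstrapping) is the right family of ideas, but there are two genuine gaps, and your route diverges from the paper's at its most delicate point.

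\paragraph{The unjustified leap to bounded curvature.} In Step~2 you pass from $|F_A(x)|\lesssim\sqrt{\varepsilon}\,\mathrm{dist}(x,S)^{-2}$ to ``$|F_A|$ is in fact bounded'' by invoking a density gap. This does not follow. The $\varepsilon$-regularity estimate gives only inverse-square decay, and with only that rate $F_A$ lies in $L^p$ near $S$ for $p<3$, \emph{not} for some $p>3$ as Uhlenbeck's gauge theorem requires in complex dimension three. This is exactly the technical heart of the paper: after reducing to an isolated singularity, Lemma~3.3 gives the $|z|^{-2}$ bound, and then Lemmas~3.4--3.5 use Uhlenbeck's broken Hodge gauges on dyadic annuli, combined with the monotonicity formula and an iteration, to \emph{improve} the decay to $|F_A|^2=o(|z|^{-4+\alpha})$ for some $\alpha>0$. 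Only this improved rate gives $F_A\in L^p$ for $p>3$, which feeds into Uhlenbeck's $L^p$ gauge extension theorem. Your Step~3 begins from ``$F_A$ being bounded,'' which is never established and is in fact more than is needed or proved.

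\paragraph{Missing the Bando--Siu reduction.} The paper does not attempt to remove the full $\mathcal H^2$-finite set $S$ directly. It first invokes the Bando--Siu theorem: since $F_A\in L^2_{\mathrm{loc}}$ and $F_A^{0,2}=0$, the holomorphic bundle extends as a reflexive sheaf across $S$, and a reflexive sheaf on a smooth threefold is locally free outside a set of \emph{complex} codimension three, i.e.\ a discrete set of points. The analytic removal-of-singularities machinery is then applied only to those isolated points. You instead try to run the extension directly across the codimension-four set $S$ via a capacity argument; this is a genuinely different strategy, but it leans on the unproved boundedness above and also sidesteps the issue of how the holomorphic structure extends. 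Note also that your proposed bound $\int|u|^4\lesssim\int|F_A|^2+\rho^6$ from equation~\eqref{DT2} is not valid, since $|[u,\bar u]|$ does not control $|u|^2$ pointwise (it can vanish on nontrivial $u$); the paper instead uses the global estimate $\|u\|_{L^\infty}\leq C\|u\|_{L^2}$ from \cite{tanaka2}, a nontrivial fact special to $(0,3)$-forms on a K\"ahler threefold.
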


Theorem \ref{ith:remov} implies that the ``top stratum'' of the
singular set $S$, which we denote by $S^{(2)}$ (see Section 2 for
its definition), can be removed. 
Our argument goes through in a similar way to that of Tian--Yang
\cite{TY02}, and we follow Nakajima \cite{Nakajima87} and 
Uhlenbeck \cite{Uh1} for removing
isolated singularities, 
except that we deal with an additional nonlinear term coming from the
extra 
field $u$. 
In Section 2, we bring some results from \cite{tanaka2}, which will 
be used in this article, and describe some structures of the singular sets. 
We then prove Theorem \ref{ith:remov} in Section 3.

\paragraph{Notation.} Throughout this article, 
$C, C'$, and $C''$ are positive constants, but they can be different each
time they 
occur.


\section{Weak convergence and the singularities}

In this section, we bring results from \cite{tanaka2}, 
and describe some properties of the singular set in Theorem \ref{th:weak}.

Firstly, we recall the following
monotonicity formula for the Donaldson--Thomas instantons on compact
K\"{a}hler threefolds.

\begin{proposition}[\cite{tanaka2}]
Let $(A,u)$ be a D--T instanton of 
a unitary vector bundle $E$ over a compact K\"{a}hler threefold $Z$.  
Then, for any $z \in Z$, there exists a  positive constant $r_z$ such that 
for any $0< \sigma < \rho< r_z$, the following holds.  
\begin{equation}
\begin{split}
&\frac{1}{\rho^2} e^{ a \rho^2} \int_{B_{\rho}(z)} 
 m(A,u) dV_g 
 -  \frac{1}{\sigma^2} e^{
a \sigma^2} \int_{B_{\sigma}(z)} 
 m (A,u) dV_g \\
& \quad \qquad \geq   \int_{\sigma}^{\rho} 8 \tau^{-3} e^{a \tau^2}
  \int_{B_{\tau} (z)}  | [ u, \bar{u}]|^2 dV_{g} d \tau \\ 
  &\qquad \qquad +  \int_{B_{\rho} (z) \setminus B_{\sigma} (z)} r^{-2} e^{a
  r^2}  \left\{  4 \left|  \frac{\partial }{\partial r}  
 \lfloor F_{A}^{\perp} \right|^2 
 - 12 \left| \frac{\partial }{\partial r}  
 \lfloor [u , \bar{u}] \right|^2 \right\}  dV_{g} , \\ 
\label{eq:monotone}
\end{split}
\end{equation}
where 
${m} (A, u) := |F_{A}^{\perp}|^2 -  | [u, \bar{u} ]|^2$, and 
$a$ is a constant which depends only on $Z$.
\label{th:monotone}
\end{proposition}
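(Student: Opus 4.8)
The plan is to deduce \eqref{eq:monotone} from a first--order (stress--energy) integral identity for D--T instantons, in the spirit of Price's monotonicity formula for Yang--Mills fields adapted to the K\"ahler Hermitian--Einstein setting, carrying the extra field $u$ through every step.

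First I would extract from \eqref{DT1}--\eqref{DT2} the second--order information that makes $(A,u)$ a critical object. Since $F_A^{0,2}=0$ and $A$ is unitary, also $F_A^{2,0}=0$, so $F_A$ is of type $(1,1)$ and $F_A^\perp$ coincides with its primitive part $(F_A^{1,1})_0$; as $(F_A^{1,1})_0\wedge\omega^2=0$ on a threefold, \eqref{DT2} reduces to an algebraic identity expressing $\Lambda F_A$ in terms of $[u,\bar u]$ and the constant $\lambda(E)$. Combining this with the Bianchi identity $d_A F_A=0$ (which, together with $F_A^{0,2}=0$, forces $\partial_A F_A^{1,1}=\bar{\partial}_A F_A^{1,1}=0$) and the K\"ahler identities $[\Lambda,\bar{\partial}_A]=-i\,\partial_A^{*}$, $[\Lambda,\partial_A]=i\,\bar{\partial}_A^{*}$ yields $d_A^{*}F_A=\mathcal Q(u,\nabla_A u)$, where $\mathcal Q$ is a first--order differential operator bilinear in $u$ and $\nabla_A u$; together with $\bar{\partial}_A^{*}u=0$ and $F_A^{0,2}=0$ this is the second--order system I would feed into the monotonicity computation.

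Next I would form the symmetric $2$--tensor $T$ on $Z$ built from $F_A^\perp$ and $[u,\bar u]$ whose trace is a fixed multiple of $m(A,u)$, compute $\mathrm{div}\,T$ using the above system, and contract it with the radial vector field $X=r\,\partial_r$ on the geodesic ball $B_\tau(z)$ for $\tau<r_z$. Applying the divergence theorem to $\mathrm{div}\big(T(X,\cdot)\big)$ over $B_\tau(z)$ and then the coarea formula produces a differential inequality for $\Phi(\tau):=\tau^{-2}e^{a\tau^2}\int_{B_\tau(z)}m(A,u)\,dV_g$ of the shape $\Phi'(\tau)\geq 8\tau^{-3}e^{a\tau^2}\int_{B_\tau(z)}|[u,\bar u]|^2\,dV_g+\tau^{-2}e^{a\tau^2}\int_{\partial B_\tau(z)}\big\{4|\tfrac{\partial}{\partial r}\lfloor F_A^\perp|^2-12|\tfrac{\partial}{\partial r}\lfloor[u,\bar u]|^2\big\}\,dS_g$, and integrating in $\tau$ from $\sigma$ to $\rho$, with the boundary integrals reassembling into the annular integral by the coarea formula, gives exactly \eqref{eq:monotone}. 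Two kinds of error terms arise on the way: those coming from $\nabla X-g=O(r^2)$ (the metric $g$ is only K\"ahler, and this is where the comparability of $g$ with the standard metric by $\Lambda$ enters), which are absorbed into the factor $e^{a\tau^2}$ with $a=a(Z)$ exactly as in the curved Price formula; and those coming from $\mathcal Q(u,\nabla_A u)$ in $d_A^{*}F_A$ and from the interaction of $X$ with $u$.

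I expect this last batch of $u$--terms to be the main obstacle. One must check that every term quadratic in $\nabla_A u$ and every cross term between $F_A^\perp$ and $u$ produced by $\mathrm{div}\,T$ and by the contraction with $X$ either cancels, carries a favourable sign, or can be dominated by $\tau^{-3}\int_{B_\tau(z)}|[u,\bar u]|^2\,dV_g$ with a constant not exceeding the $8$ already allotted; it is precisely this cancellation that forces the minus sign in $m(A,u)=|F_A^\perp|^2-|[u,\bar u]|^2$ and pins down the coefficients $8$, $4$ and $12$. The metric error terms, and the justification of the integrations by parts, are routine by comparison, since a D--T instanton over the compact manifold $Z$ is globally smooth.
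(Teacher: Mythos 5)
Your overall strategy --- extract from \eqref{DT1}--\eqref{DT2}, via the Bianchi identity and the K\"ahler identities, a second--order system of the form $d_A^{*}F_A=\mathcal Q(u,\nabla_A u)$, $\bar{\partial}_A^{*}u=0$, and then run a Price--Tian stress--energy argument with the radial field $r\partial_r$, absorbing the curved-metric errors into the factor $e^{a\tau^2}$ --- is the right kind of argument and is in the spirit of the proof in \cite{tanaka2} (note that the present paper only quotes the statement from there and gives no proof). But the proposal has a genuine gap exactly where the proposition has its content: you never carry out the divergence computation for the coupled tensor $T$, and you explicitly defer the verification that the terms quadratic in $\nabla_A u$ and the cross terms between $F_A^{\perp}$ and $u$ ``cancel, carry a favourable sign, or can be dominated''. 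That computation \emph{is} the proposition: it is what produces the density $m(A,u)=|F_A^{\perp}|^2-|[u,\bar u]|^2$ rather than $|F_A|^2$, the coefficient $8$ in the bulk term, and the specific combination $4\,|\frac{\partial}{\partial r}\lfloor F_A^{\perp}|^2-12\,|\frac{\partial}{\partial r}\lfloor[u,\bar u]|^2$ on the annulus, including its negative part. Saying that the required cancellation ``pins down'' the coefficients $8$, $4$, $12$ is circular; one must exhibit the identity, not infer it from the statement to be proved.

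There is also an internal inconsistency in the absorption scheme: you propose to dominate leftover cross terms by $\tau^{-3}\int_{B_\tau}|[u,\bar u]|^2$ ``with a constant not exceeding the $8$ already allotted'', but that term appears on the right-hand side of \eqref{eq:monotone} as part of the lower bound, so any portion of it spent on absorbing errors strictly reduces the coefficient below $8$; the argument only closes if the raw computation yields a larger constant, which you have not shown. Finally, the plan does not explain how the first-order terms $\mathcal Q(u,\nabla_A u)$ (and the $\nabla_A u$-terms generated when $X$ hits $u$) are traded for the zeroth-order quantity $|[u,\bar u]|^2$ or for the exact radial-derivative term on the annulus with radius-independent constants on small balls; whether they combine into exact divergences contributing the boundary/annulus terms or must be estimated is precisely the delicate bookkeeping, and until it is written out the proposal remains a plausible outline rather than a proof of \eqref{eq:monotone}.
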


Another advantage of working on compact K\"{a}hler threefolds is the
following estimate on the extra field $u$: 
\begin{proposition}[\cite{tanaka2}] 
Let $(A,u)$ be a D--T instanton 
of 
a unitary vector bundle $E$ over a compact K\"{a}hler threefold $Z$. 
Then we have 
\begin{equation}
 || u ||_{L^{\infty}} 
\leq C || u ||_{L^2} , 
\end{equation}
where $C >0 $ is a positive constant which depends only on $Z$. 
\label{estu}
\end{proposition}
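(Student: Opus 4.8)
The plan is to reduce the estimate to a Bochner inequality for a holomorphic section of an auxiliary Hermitian bundle, and then run a Moser iteration on $Z$. First, observe that on the threefold $Z$ the field $u$ has top anti-holomorphic degree, so $\bar\partial_A u=0$ automatically; combined with the hypothesis $\bar\partial_A^*u=0$, the form $u$ is $\bar\partial_A$-harmonic. Since $F_A^{0,2}=0$, the operator $\bar\partial_A$ is integrable, so $(E,\bar\partial_A)$ is a holomorphic bundle and $A$ is its Chern connection. Let $\ast$ denote the conjugate-linear Hodge star determined by $g$ and the Hermitian metric of $E$, and put $v:=\ast u$, a section of $L:=K_Z\otimes E^*$. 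This is Serre duality realised at the level of harmonic forms: $\ast$ sends $\bar\partial_A$-harmonic $E$-valued $(0,3)$-forms to holomorphic sections of $K_Z\otimes E^*$, so $v$ is holomorphic, and since $\ast$ is a pointwise isometry, $|v|=|u|$ everywhere and $\|v\|_{L^2}=\|u\|_{L^2}$. It thus suffices to prove $\|v\|_{L^\infty}\le C\|v\|_{L^2}$.

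For the holomorphic section $v$, the Bochner--Kodaira identity on a K\"ahler manifold reads $\bar\partial_L^*\bar\partial_L v=\tfrac12\nabla_L^*\nabla_L v-\tfrac{\sqrt{-1}}{2}(\Lambda\Theta_L)v$, where $\Theta_L$ is the Chern curvature of $L$; since $\bar\partial_L v=0$ this gives $\nabla_L^*\nabla_L v=\sqrt{-1}(\Lambda\Theta_L)v$, and hence, with $\Delta$ the nonnegative Laplace--Beltrami operator,
\begin{equation*}
\tfrac12\Delta|v|^2=\mathrm{Re}\,\langle\nabla_L^*\nabla_L v,v\rangle-|\nabla_L v|^2\le\langle\sqrt{-1}(\Lambda\Theta_L)v,v\rangle .
\end{equation*}
Now $\Lambda\Theta_L=\Lambda\Theta_{K_Z}+\Lambda\Theta_{E^*}$; the first summand is a multiple of the scalar curvature of $(Z,g)$ and is bounded by a constant depending only on $(Z,g)$, while $\sqrt{-1}\Lambda\Theta_{E^*}=-(\sqrt{-1}\Lambda F_A)^{\top}$. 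At this stage I would feed in the Donaldson--Thomas equation \eqref{DT2}: since $F_A^{1,1}\wedge\omega^2=\tfrac13(\Lambda F_A)\,\omega^3$ on a threefold, writing $[u,\bar u]=\beta\,\omega^3$ with $\beta\in\Gamma(\mathfrak{u}(E))$ a pointwise quadratic expression in $u$, the equation \eqref{DT2} becomes $\sqrt{-1}\Lambda F_A=\lambda(E)\,Id_E-3\sqrt{-1}\beta$.

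The crucial point, and the step I expect to be the main obstacle, is the sign of the resulting self-interaction term $\langle\sqrt{-1}\beta^{\top}v,v\rangle$: one must verify, by a direct pointwise computation from the definition of $[u,\bar u]$, that $\sqrt{-1}\beta$ is a \emph{nonpositive} Hermitian endomorphism of $E$ (morally because $-\xi^2\succeq 0$ for skew-Hermitian $\xi$), so that this term is $\le 0$. This is the algebraic mechanism behind the favourably-signed contribution $\int_\sigma^\rho 8\tau^{-3}e^{a\tau^2}\int_{B_\tau}|[u,\bar u]|^2\,dV_g\,d\tau$ in the monotonicity formula \eqref{eq:monotone}. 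Granting it, and since the $\lambda(E)\,Id_E$ part contributes only a bounded multiple of $|v|^2$, we obtain $\Delta|v|^2\le K|v|^2$ on $Z$ with no quartic term in $v$ surviving, where $K$ depends only on $(Z,g)$ and the fixed constant $\lambda(E)$; it is precisely the absence of a $|v|^4$ term that forces the final constant to be independent of the individual instanton $(A,u)$.

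Finally, $|v|^2$ is a nonnegative smooth subsolution of the uniformly elliptic equation $\Delta w\le Kw$ on the compact manifold $Z$, so the standard local boundedness estimate (Moser iteration, or De Giorgi--Nash--Moser) applies: for every $z\in Z$ and every $r$ below the injectivity radius of $(Z,g)$ one gets $\sup_{B_r(z)}|v|^2\le C(Z,K,r)\,r^{-6}\int_{B_{2r}(z)}|v|^2\,dV_g$. Covering the compact $Z$ by finitely many such balls yields $\|v\|_{L^\infty(Z)}^2\le C\,\|v\|_{L^2(Z)}^2$, and since $|u|=|v|$ pointwise this is the asserted estimate $\|u\|_{L^\infty}\le C\|u\|_{L^2}$.
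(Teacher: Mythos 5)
Your overall strategy --- note that $\bar\partial_A u=0$ automatically for degree reasons, so that $u$ is $\bar\partial_A$-harmonic, run a Bochner/Weitzenb\"ock argument, substitute the equation \eqref{DT2} for the curvature term, and finish by Moser iteration on the compact manifold $Z$ --- is indeed the strategy of the proof in \cite{tanaka2}. However, the step you yourself single out as crucial, namely the sign of the quartic self-interaction term, is both left unproved (``granting it'') and, in the form you propose to prove it, false. First, $u$ is $\mathfrak{u}(E)$-valued, not $E$-valued (this is exactly why the bracket $[u,\bar u]$ in \eqref{DT2} makes sense), so its Serre dual is a holomorphic section of $K_Z\otimes\mathrm{End}(E)$ rather than of $K_Z\otimes E^*$, and the curvature enters the Bochner formula through the adjoint action $[i\Lambda F_A,\cdot\,]$, not through multiplication by $i\Lambda F_A$. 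Second, writing $[u,\bar u]=\beta\,\omega^3$ with $\beta\in\Gamma(\mathfrak{u}(E))$, the Hermitian endomorphism $i\beta$ is pointwise of the form $[\eta,\eta^*]$ up to a real factor, hence has vanishing trace; a trace-free Hermitian endomorphism is negative semidefinite only if it is zero, so your claim ``$i\beta\preceq 0$'' is equivalent to $[u,\bar u]\equiv 0$ and cannot be the mechanism behind the estimate.

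The mechanism that actually works (and is used in \cite{tanaka2}) is of Hitchin--Simpson type: after substituting \eqref{DT2} into the commutator term, the $\lambda(E)\,Id_E$ part is annihilated by the bracket --- which is also why the constant in the proposition depends only on $Z$ and not on $E$ through $\lambda(E)$, unlike in your version --- and the remaining quartic contribution pairs with $u$ to give, up to a positive factor, the perfect square $|[u,\bar u]|^2\ge 0$ with a favourable sign; this is the same positivity that produces the term $\int_\sigma^\rho 8\tau^{-3}e^{a\tau^2}\int_{B_\tau}|[u,\bar u]|^2\,dV_g\,d\tau$ in the monotonicity formula \eqref{eq:monotone}. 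With that identity one obtains $\Delta|u|^2\le C|u|^2$ with $C=C(Z)$ and no quartic remainder, and your concluding Moser iteration then goes through verbatim. So the skeleton of your argument is right, but the pointwise semidefiniteness claim must be replaced by the commutator-square identity, and the computation must be carried out on the adjoint bundle.
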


In this article, we use the following curvature estimate essentially
proved in \cite{tanaka2} by using the monotonicity formula and the
estimate on $u$ above. 
\begin{proposition}
Let $(A,u)$ be a D--T instanton 
of 
a unitary vector bundle $E$ over a compact K\"{a}hler threefold $Z$.  
Then there exist constants $\varepsilon >0$ 
and $C_1 > 0 $
 which depend only on $Z$ such that 
for any $z \in Z$ and $0 < r < r_z$, where $r_z$ is the constant in
 Proposition \ref{th:monotone},  if 
$ \frac{1}{r^{2}} \int_{B_{r}(z)} |F_{A}|^2 \, dV_g \leq \varepsilon$ 
and $\int_{B_{r}(z)} |u|^2 dV_{g} < \varepsilon$, 
then 
$$ | F_{A} | (z) 
\leq \frac{C_1}{r^2} \left( \frac{1}{r^{2}} 
\int_{B_{r}(z)} |F_{A}|^2 \,  dV_g  \right)^{\frac{1}{2}} 
+ C_1 \varepsilon r. 
$$
\label{th:est}
\end{proposition}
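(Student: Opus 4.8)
The strategy is the standard $\varepsilon$-regularity scheme for Yang--Mills-type equations --- a Bochner--Weitzenb\"ock differential inequality, propagation of small energy by the monotonicity formula, and a Moser iteration --- adapted to absorb the extra field $u$. Since the connection is unitary, $F_A^{2,0}=-(F_A^{0,2})^{*}=0$, so by \eqref{DT1} one has $F_A=F_A^{1,1}$; write $F_A^{1,1}=F_A^{\perp}\oplus F_A^{\omega}$ for its decomposition into the primitive part and the component along $\omega$. Equation \eqref{DT2} determines $F_A^{\omega}$ algebraically from $[u,\bar u]$ and $Id_E$, so $|F_A^{\perp}|\le|F_A|\le|F_A^{\perp}|+C(1+|u|^{2})$ and it suffices to estimate $|F_A^{\perp}|$. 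Differentiating \eqref{DT1}--\eqref{DT2}, using the K\"ahler identities and the Bianchi identity, and applying the Weitzenb\"ock formula, one obtains, in the weak sense (Kato's inequality at the zeros of $F_A^{\perp}$),
$$\Delta|F_A^{\perp}|\;\le\;C|F_A^{\perp}|^{2}+C\bigl(1+|u|^{2}\bigr)|F_A^{\perp}|+C|\nabla_A u|^{2}+C\bigl(1+|u|^{2}\bigr).$$
On a K\"ahler threefold a $(0,3)$-form is of top antiholomorphic degree, so $\bar\partial_A u=0$ automatically; together with $\bar\partial_A^{*}u=0$ from \eqref{DT1} this makes $u$ $\bar\partial_A$-harmonic, and the Bochner--Kodaira--Nakano identity gives $\nabla_A^{*}\nabla_A u=\mathcal R\ast u$ with $|\mathcal R|\le C(1+|F_A|)$, whence $\Delta|u|\le C(1+|F_A|)|u|$ weakly and, by interior elliptic estimates, $\int_{B'}|\nabla_A u|^{2}\,dV_g$ is bounded on interior balls $B'$ in terms of $\|u\|_{L^{\infty}}$ and $\|F_A\|_{L^{2}}$. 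By Proposition \ref{estu} and the hypothesis $\int_{B_r(z)}|u|^{2}\,dV_g<\varepsilon$, all the $u$-terms above are controlled in terms of $\varepsilon$.

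Next, Proposition \ref{th:monotone} is used to upgrade the energy hypothesis to a scale-invariant smallness holding at every scale, which is what allows the iteration to close. Fix $z$ and $0<r<r_z$. By \eqref{eq:monotone} the function $\rho\mapsto\rho^{-2}e^{a\rho^{2}}\int_{B_\rho(w)}m(A,u)\,dV_g$ is monotone in $\rho$ up to the correction terms; the negative term $-12\,|\tfrac{\partial}{\partial r}\lfloor[u,\bar u]|^{2}$ in \eqref{eq:monotone} and the deficit $|F_A|^{2}-m(A,u)=|F_A^{\omega}|^{2}+|[u,\bar u]|^{2}$ are both dominated using $\|u\|_{L^{\infty}}\le C\|u\|_{L^{2}}$ (Proposition \ref{estu}) together with the smallness of $\|u\|_{L^{2}}$. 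Hence the hypothesis $\tfrac{1}{r^{2}}\int_{B_r(z)}|F_A|^{2}\,dV_g\le\varepsilon$ propagates to the scale-invariant bound $\tfrac{1}{\sigma^{2}}\int_{B_\sigma(w)}|F_A|^{2}\,dV_g\le C\varepsilon$ for all balls $B_\sigma(w)\subset B_{r/2}(z)$, which for $\varepsilon$ small enough lies below the threshold required by the Moser iteration.

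With the subsolution inequalities of the first step and the uniform scale-invariant energy bound of the second, the De Giorgi--Nash--Moser iteration --- carried out either directly with $r$-dependent cut-off functions as in \cite{Nakajima87}, \cite{Uh1}, or after rescaling $B_r(z)$ to the unit ball as in \cite{TY02} --- yields a pointwise bound for $|F_A^{\perp}|$ at $z$ of the form
$$|F_A^{\perp}|^{2}(z)\;\le\;\frac{C}{r^{6}}\int_{B_r(z)}|F_A|^{2}\,dV_g\;+\;C\varepsilon^{2}r^{2},$$
where the additive term collects the $u$-contributions together with the non-scale-invariant pieces, namely the curvature of $Z$ and the constant term $i\tfrac{\lambda(E)}{3}Id_E\omega^{3}$ in \eqref{DT2}. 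Since $|F_A|\le|F_A^{\perp}|+C(1+|u|^{2})$ and $\|u\|_{L^{\infty}}$ is controlled by Proposition \ref{estu}, taking square roots and absorbing the lower-order contributions into the error term gives
$$|F_A|(z)\;\le\;\frac{C_1}{r^{2}}\Bigl(\frac{1}{r^{2}}\int_{B_r(z)}|F_A|^{2}\,dV_g\Bigr)^{1/2}+C_1\varepsilon r,$$
with $C_1=C_1(Z)$, as asserted.

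The main obstacle, and the only essential departure from the Hermitian--Einstein case, is the coupling through the term $[u,\bar u]$: one must run the iteration for $F_A^{\perp}$ and for $\nabla_A u$ simultaneously, and check that neither the negative term in the monotonicity formula \eqref{eq:monotone} nor the $u$-terms in the Weitzenb\"ock inequality prevent the propagation and absorption of small energy. It is precisely here that the $L^{\infty}$--$L^{2}$ estimate of Proposition \ref{estu}, special to compact K\"ahler threefolds, is used decisively; without it the $u$-contributions could not be absorbed and the iteration would not close.
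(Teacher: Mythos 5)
Your proposal follows essentially the same route the paper indicates: the paper does not reprove Proposition \ref{th:est} here but cites \cite{tanaka2}, where it is obtained exactly from the monotonicity formula (Proposition \ref{th:monotone}) and the $L^{\infty}$--$L^{2}$ estimate on $u$ (Proposition \ref{estu}), fed into the standard Bochner--Weitzenb\"ock plus small-energy Moser iteration scheme of Uhlenbeck--Nakajima/Tian--Yang. Your sketch of that scheme, including the use of \eqref{DT2} to control the trace part of $F_A$ by $|u|^{2}$ and the propagation of scale-invariant smallness via \eqref{eq:monotone}, is consistent with that argument.
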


With these above in mind, we next recall that 
the singular set $S$ in Theorem \ref{th:weak} is given by 
$$
S := {S} (\{ (A_n , u_n )\})
:= \bigcap_{r>0} \{ z \in Z \, : \, \liminf_{n\to\infty}  
\frac{1}{r^2} \int_{B_r (z)}
m (A_{n} , u_{n})  \, dV_g \geq \varepsilon\}. 
$$

We note that this set $S$ is 2-rectifiable, which can be proved by using  
a result by Preiss \cite{Preiss} (see also \cite{Mos}) as follows.   
Let $\mu_{n} := m (A_n , u_n ) dV_{g}$ be a sequence of Radon measures. 
Then, we may assume that 
$\mu_n$ converges to a Radon measure $\mu$ on $Z$ 
after taking a subsequence if necessary, that is, 
for any continuous function $\phi$ 
of compact support over $Z$, 
$ \lim_{n \to \infty} \int_{Z} \phi \,  m (A_n , u_n ) 
dV_g = \int_{Z} \phi
d \mu $. 
We can write 
$ \mu = m (A , u ) dV_g + \nu $, 
where $\nu$ is a non-negative Radon measure on $Z$. 
From Proposition \ref{th:monotone}, for any $z \in Z$, 
$e^{ar^2} r^{-2} \mu (B_{r} (z))$ is a non-decreasing function of $r$, 
hence the density $\Theta (\mu ,z) := \lim_{r \to 0 +} r^{-2} \mu (B_{r}
(z))$ exists for all $z \in Z$, and from the definition of $S$, $z \in S$ if and
only if $\Theta (\mu ,z) \geq \varepsilon$. 
Hence, for $\mathcal{H}^{2}\text{-a.e.}\, \, \, \,  z \in S ( \{
 (A_n , u_n )  \})$, we can write  
$ \nu (z) = \Theta (\mu , z) \mathcal{H}^{2} \lfloor S (\{  (A_n , u_n )
\})$. 
Note that we have $\Theta (\mu, z) 
\leq 4^{2} r_{z}^{- 2} e^{a r_{z}^2} C$  
from the monotonicity formula.  
We also have the following: 
\begin{proposition}
$\mathcal{H}^2$-a.e. $z \in S$. 
$$ \lim_{r \to 0+} 
\frac{1}{r^2} \int_{B_{r}(z)} |F_{A}|^2 \, dV_g =0 .$$
\label{prop:sz}
\end{proposition}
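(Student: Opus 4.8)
The plan is to reduce the statement to a fact about the weakly limiting measure $\mu$ and its defect part $\nu$, for which the $2$-rectifiability of $S$ established above does most of the work. First I would dispose of the "easy" pieces of $|F_A|^2$. Since $F_{A}^{0,2}=0$ by \eqref{DT1} (and hence $F_A^{2,0}=0$ as well, $A$ being unitary), only the $(1,1)$-part survives, and the $\omega$-component of $F_{A}^{1,1}$ is determined pointwise by \eqref{DT2} as $-[u,\bar u]$ plus a fixed multiple of $Id_E$; together with Proposition \ref{estu} this gives $|F_A|^2\le|F_A^{\perp}|^2+C$ and $|[u,\bar u]|^2\le|u|^4\le C$. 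Using $m(A,u)=|F_A^{\perp}|^2-|[u,\bar u]|^2$ we obtain
$$
\frac{1}{r^2}\int_{B_r(z)}|F_A|^2\,dV_g\le \frac{1}{r^2}\int_{B_r(z)}m(A,u)\,dV_g+\frac{1}{r^2}\int_{B_r(z)}|[u,\bar u]|^2\,dV_g+\frac{C}{r^2}\,|B_r(z)|,
$$
and the last two terms are $O(r^4)$. So it suffices to prove $\frac{1}{r^2}\int_{B_r(z)}m(A,u)\,dV_g\to 0$ for $\mathcal{H}^2$-a.e. $z\in S$.

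Next I would split off the defect measure. Recall $\mu=m(A,u)\,dV_g+\nu$, and that by Proposition \ref{th:monotone} the density $\Theta(\mu,z)=\lim_{r\to 0+}r^{-2}\mu(B_r(z))$ exists at every $z$. Writing
$$
\frac{1}{r^2}\int_{B_r(z)}m(A,u)\,dV_g=\frac{1}{r^2}\mu(B_r(z))-\frac{1}{r^2}\nu(B_r(z)),
$$
it is enough to show $\lim_{r\to 0+}r^{-2}\nu(B_r(z))=\Theta(\mu,z)$ for $\mathcal{H}^2$-a.e. $z\in S$. For this I would invoke the $2$-rectifiability of $S$ proved above together with the representation $\nu=\Theta(\mu,\cdot)\,\mathcal{H}^2\lfloor S$: since $S$ is $2$-rectifiable with locally finite $\mathcal{H}^2$-measure, $\mathcal{H}^2\lfloor S$ has the standard $2$-density at $\mathcal{H}^2$-a.e. $z\in S$, and applying the Lebesgue-type differentiation theorem for the $\mathcal{H}^2\lfloor S$-integrable density $\Theta(\mu,\cdot)$ yields $r^{-2}\nu(B_r(z))\to\Theta(\mu,z)$ for $\mathcal{H}^2$-a.e. $z\in S$. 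Combined with the reduction in the previous paragraph, this proves the proposition.

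The substantive point — and the step I expect to require the most care — is the last one: showing that the defect measure $\nu$ carries the \emph{entire} $2$-dimensional density of $\mu$ along $S$, equivalently that the absolutely continuous part $m(A,u)\,dV_g$ contributes no $2$-density at $\mathcal{H}^2$-a.e. point of $S$. This is not automatic for a generic $L^1$ density, whose blow-ups could in principle concentrate along a $2$-rectifiable set; it is precisely here that the rectifiability of $S$ (obtained via Preiss's theorem) and the structure $\nu=\Theta(\mu,\cdot)\,\mathcal{H}^2\lfloor S$ are essential. One should also be careful that the normalization of $\mathcal{H}^2$ is chosen so that the density of $\mathcal{H}^2\lfloor S$ at a typical point of $S$ matches the $r^{-2}$-normalization used in the definition of $\Theta(\mu,\cdot)$; with that convention fixed, the differentiation argument above goes through.
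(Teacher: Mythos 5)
Your approach is genuinely different from the paper's, but it runs into a circularity problem. You reduce the claim to showing $\lim_{r\to 0+}r^{-2}\nu(B_r(z))=\Theta(\mu,z)$ for $\mathcal{H}^2$-a.e.\ $z\in S$, and for this you invoke (i) the $2$-rectifiability of $S$ and (ii) the representation $\nu=\Theta(\mu,\cdot)\,\mathcal{H}^2\lfloor S$, treating both as already ``established above.'' In the paper, however, the rectifiability of $S$ is deduced from Preiss's theorem \emph{after} Proposition \ref{prop:sz}, and indeed Proposition \ref{prop:sz} is an input to it: to apply Theorem \ref{th:preiss} to the defect measure $\nu$ one needs the density $\Theta(\nu,z)=\Theta(\mu,z)-\lim_{r\to 0+}r^{-2}\int_{B_r(z)}m(A,u)\,dV_g$ to exist, be positive and finite for a.e.\ $z$, and this is exactly what Proposition \ref{prop:sz} supplies. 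Likewise, upgrading the a priori representation $\nu=\theta(\cdot)\,\mathcal{H}^2\lfloor S$ (which does follow from the density bounds and the support of $\nu$ alone) to the sharper identity with $\theta=\Theta(\mu,\cdot)$ already requires knowing that the absolutely continuous part $m(A,u)\,dV_g$ contributes no $2$-density at $\mathcal{H}^2$-a.e.\ point of $S$ — again the content of Proposition \ref{prop:sz}. So the facts you call on to prove the statement are themselves downstream of it.

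The paper's proof is much more elementary and self-contained: it does not touch $\mu$, $\nu$, or rectifiability at all. One sets $S_j:=\{z:\limsup_{r\to 0+}r^{-2}\int_{B_r(z)}|F_A|^2\,dV_g>j^{-1}\}$ and shows $\mathcal{H}^2(S_j)=0$ for every $j$ by a Vitali-type covering argument: choosing a finite cover of $S_j$ by $\delta$-balls centered in $S_j$ with disjoint half-balls, one obtains $N(\delta/2)^2\le j\int_{S_j^\delta}|F_A|^2$, which first forces $\mathcal{H}^6(S_j)=0$ (so $S_j$ is Lebesgue-null), and then, since $|F_A|^2\in L^1_{loc}$ by admissibility, dominated convergence gives $\int_{S_j^\delta}|F_A|^2\to 0$ as $\delta\to 0$ and hence $\mathcal{H}^2(S_j)=0$. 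This uses only the $L^1$-integrability of $|F_A|^2$, which is the right level of generality: the statement is really a general measure-theoretic fact about $L^1$ functions, not something that needs the structure of the defect measure. Your reduction of $|F_A|^2$ to $m(A,u)$ via the equations and Proposition \ref{estu} is correct but unnecessary; and your closing caveat about the normalization of $\mathcal{H}^2$ versus the $r^{-2}$-normalization of $\Theta$ is a real issue that would have to be resolved (there is generically a factor of $\pi$ floating around), though it is moot given the circularity above.
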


\begin{proof}
We follow an argument by Tian \cite[pp.~222]{Tian00} 
(see also \cite[\S 1.7 Cor.~3]{sim}).   
We consider 
$ S_{j} 
:= \{ z \, : \varlimsup_{r \to 0+} \int_{B_{r} (z)} 
|F_{A} |^2 \, dV_{g} > j^{-1} \} $, 
and prove that $\mathcal{H}^{2} (S_{j}) =0 $ for each $j \geq 1$.

For $\delta > 0$,
we take a covering $\{ B_{\delta} (z_{\alpha}) \}_{\alpha = 1, \dots , N}$ 
of $S_{j}$ such that $z_{\alpha} \in S_{j} \, (\alpha = 1, \dots , N) $,  
and $B_{\delta /2} (z_{\alpha}) 
\cap B_{\delta /2} (z_{\beta}) = \emptyset$ if $\alpha \neq \beta$. 
By definition, for any $z \in S_{j}$, we have 
$\frac{1}{r^2} \int_{B_{r} (z)} |F_{A}|^2 dV_{g} > j^{-1}$ for $r < \delta$.  
Thus, we have  
\begin{equation} 
N \left( \frac{\delta}{2} \right)^2 \leq 
 j \int_{\bigcup B_{\delta /2 (z_{\alpha})}} 
 |F_{A}|^2 \, dV_{g} \leq j 
\int_{S_{j}^{\delta}} |F_{A}|^2 \, dV_{g} , 
\label{eq:sz}
\end{equation}
where $S_{j}^{\delta} = \{ z \in Z \, : \, 
\text{dist} (z , S_{j} ) < \delta \}$. 
Hence, we get 
\begin{equation}
 N \delta^{6} 
\leq 2^{2} \delta^4 j
\int_{S_{j}^{\delta}} |F_{A}|^2 \, dV_{g} . 
\label{sz} 
\end{equation}
Since $B_{\delta} (z_{\alpha})$'s cover $S_{j}$, 
\eqref{sz} implies that 
$\mathcal{H}^{6} (S_{j}) \leq C \delta^4 \int_{S_{j}^{\delta}} 
 | F_{A} |^2 \, dV_{g} $. 
Thus,  
as we can take $\delta \downarrow 0$, 
$S_{j}$
 has the Lebesgue measure zero. 
Then by the dominated convergence theorem, 
we get $\lim_{\delta \to 0} \int_{S_{j}^{\delta}} |F_{A}|^2 \to 0$. 
Therefore, by \eqref{eq:sz}, we conclude 
$\mathcal{H}^{2} (S_{j}) =0$. 
\end{proof}

In order to see that the singular set $S$ is 2-rectifiable, 
we invoke the following theorem by Preiss.  
\begin{theorem}[Preiss \cite{Preiss}, see also \cite{Mat}, \cite{DeL}]
If $0 \leq m \leq p$ are integers, $\Omega$ is a Borel measure on $\R^p$
 such that 
$0 < \lim_{r \to 0} \frac{\Omega (B_{r} x)}{r^{m}} < \infty$ for almost
 all $x \in \Omega$, then $\Omega$ is $m$-rectifiable. 
\label{th:preiss}
\end{theorem}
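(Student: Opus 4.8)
The plan is not to give a new argument but to recall the structure of the deep theorem of Preiss; the full proof occupies most of \cite{Preiss} and is written out in \cite{DeL} (see also \cite{Mat}). For the application here only the case $m=2$ in $\R^6$ is needed, but already there the full strength of the argument is required, since Besicovitch's classical density theorem covers only $m=1$. The whole proof is organized around \emph{tangent measures}: at $\Omega$-a.e.\ $x$ one forms the rescalings $\Omega_{x,r}(A):=r^{-m}\Omega(x+rA)$ and lets $\mathrm{Tan}(\Omega,x)$ be the set of their weak-$*$ subsequential limits as $r\to0$. Two structural facts are used throughout: $\mathrm{Tan}(\Omega,x)$ is nonempty and closed under passage to a tangent measure (a tangent measure, at a point of its own support, of a member of $\mathrm{Tan}(\Omega,x)$ again lies in $\mathrm{Tan}(\Omega,x)$); and, because the density $\Theta^m(\Omega,x)=\lim_{r\to0}r^{-m}\Omega(B_r(x))$ exists, is positive and finite a.e., a covering/approximate-continuity argument shows that for a.e.\ $x$ every $\nu\in\mathrm{Tan}(\Omega,x)$ is \emph{$m$-uniform}, meaning $\nu(B_s(y))=\Theta^m(\Omega,x)\,\omega_m s^m$ for all $y\in\mathrm{spt}\,\nu$ and $s>0$; after normalization we may take the constant to be $\omega_m$.

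The first substantial step is to show that every $m$-uniform measure $\nu$ on $\R^p$ is $m$-rectifiable. The tool is the family of moment functionals $G_\nu(x,t):=\int e^{-t|y-x|^2}\,d\nu(y)$; the uniformity of $\nu$ forces $x\mapsto G_\nu(x,t)$ to be constant on $\mathrm{spt}\,\nu$, equal to a universal function of $t$ alone. Differentiating in $t$ and Taylor-expanding in $x$ generates a hierarchy of polynomial identities that $\mathrm{spt}\,\nu$ must satisfy, which forces $\mathrm{spt}\,\nu$ into the common zero set of finitely many real-analytic functions; a stratification argument then produces an open dense subset of $\mathrm{spt}\,\nu$ that is a smooth embedded $m$-manifold, whence $\nu\ll\mathcal{H}^m\lfloor\mathrm{spt}\,\nu$ and $\nu$ is $m$-rectifiable. (Uniform measures need not be flat once $m\ge3$, so this step genuinely needs the analytic-variety structure, not merely a plane.)

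The heart of the matter is to promote this to \emph{flatness} of the tangent measures at a.e.\ point. Let $\mathcal{F}$ denote the set of flat measures $\theta\,\mathcal{H}^m\lfloor V$ with $V$ an affine $m$-plane. Using the moment expansion above one proves a rigidity/gap statement: there is $\varepsilon_0>0$ such that any $m$-uniform measure within weak-$*$ distance $\varepsilon_0$ of $\mathcal{F}$ on the unit ball already lies in $\mathcal{F}$; since $\mathcal{F}$ is also relatively closed, it is relatively clopen in the space of uniform measures. Next, $\mathrm{Tan}(\Omega,x)$ is connected at a.e.\ $x$ — in the geometric application this is immediate from the monotonicity of $r\mapsto e^{ar^2}r^{-2}\mu(B_r(x))$ supplied by Proposition~\ref{th:monotone}, while in Preiss's generality it follows from the continuity of density ratios together with the tangent-measures-to-tangent-measures lemma. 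Finally, the technical core of \cite{Preiss} is a quantitative estimate showing that non-flatness of $\Omega_{x,r}$ forces the density ratio $r^{-m}\Omega(B_r(x))$ to vary by a definite amount between comparable scales; since that ratio converges as $r\to0$ (the density exists), a telescoping argument shows that at a.e.\ $x$ some $\nu\in\mathrm{Tan}(\Omega,x)$ lies in $\mathcal{F}$. By the gap statement and connectedness, \emph{all} of $\mathrm{Tan}(\Omega,x)$ then lies in $\mathcal{F}$.

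It remains to conclude. At $\Omega$-a.e.\ $x$ we now have $\Omega_{x,r}\to\Theta^m(\Omega,x)\,\mathcal{H}^m\lfloor V_x$ as $r\to0$ for some affine $m$-plane $V_x$, with $0<\Theta^m(\Omega,x)<\infty$; the Marstrand--Mattila rectifiability criterion — a Radon measure with a.e.\ positive finite upper density all of whose tangent measures are flat is $m$-rectifiable — then finishes the proof, via the standard iteration that covers $\Omega$-a.e.\ point by Lipschitz graphs using the flatness at successively finer scales. The decisive obstacle is the third step above: the gap theorem for uniform measures and the quantitative estimate controlling the flatness defect of $\Omega_{x,r}$ by the variation of the density ratio require precisely the delicate moment computations and the tangent-measures-to-tangent-measures machinery that form the technical heart of Preiss's work; the remaining steps are comparatively standard geometric measure theory.
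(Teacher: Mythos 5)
This statement is not proved in the paper at all: it is imported verbatim from Preiss \cite{Preiss} (with \cite{Mat}, \cite{DeL} cited as expositions) and used as a black box, together with the monotonicity formula of Proposition \ref{th:monotone} (which supplies the existence and positivity/finiteness of the density $\Theta(\mu,z)$ on $S$), to conclude that $S$ is $2$-rectifiable. So there is no internal argument to compare yours with; what you have written is a summary of the proof in the cited sources rather than a proof, and as a summary it is essentially faithful: tangent measures are $m$-uniform at a.e.\ point, the moment expansions control uniform measures, flat measures are relatively open and closed among uniform ones, connectedness of $\mathrm{Tan}(\Omega,x)$ together with the existence of one flat tangent measure forces flatness of all tangent measures a.e., and the Marstrand--Mattila criterion concludes. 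Two inaccuracies are worth flagging, though neither is a gap given that the heavy lifting is explicitly deferred to \cite{Preiss}, \cite{DeL}. First, the step ``every $m$-uniform measure is $m$-rectifiable'' via the real-analytic--variety description of its support is the later Kirchheim--Preiss theorem; it is not the route of Preiss's argument and is not needed once one knows that a.e.\ tangent measure is flat, which is what the Marstrand--Mattila criterion actually consumes. Second, connectedness of $\mathrm{Tan}(\Omega,x)$ is not a consequence of the monotonicity formula: it follows from the weak-$*$ continuity of $r \mapsto \Omega_{x,r}$ together with the compactness coming from the a.e.\ finite upper density; in the geometric application the monotonicity formula enters earlier, to guarantee that $\lim_{r\to 0+} r^{-2}\mu(B_r(z))$ exists at every point, i.e.\ to verify the hypothesis of Theorem \ref{th:preiss} for the measure $\Theta(\mu,\cdot)\,\mathcal{H}^2\lfloor S$. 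With these caveats, your account is consistent with how the theorem is stated and used in Section 2.
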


Theorem \ref{th:preiss} 
tells us that 
the singular set $S$ 
of a weak convergence sequence $\{ (A_n , u_n ) \}$ 
in Theorem \ref{th:weak} 
is 2-rectifiable. In particular, 
for  $\mathcal{H}^{2}$-a.e. $s \in S$
there exists a unique tangent plane $T_s S$.

We remark few more on the structure of the singular sets in Theorem
\ref{th:weak}. 
We put 
\begin{equation}
S^{(2)} :
= \overline {\{z\in S(\{  (A_n , u_n )  \})\,|\, \Theta(\mu, z) > 0,~
\lim_{r\to 0+} \frac{1}{r^{2}} \int_{B_r(z)} |F_{A}|^2  \, 
dV_g=0\}}. 
\label{S2}
\end{equation}
As we describe in Section 3, the limit solution $(A,u)$ extends across
this set $S^{(2)}$. 
We define $S^{(0)} = S \setminus S^{(2)}$, namely, 
$$ S^{(0)} 
= 
{\{z \in S(\{  (A_n , u_n )  \})\,|\, \Theta(\mu, z) > 0,~
\lim_{r\to 0+} \frac{1}{r^{2}} \int_{B_r(z)} |F_{A}|^2 \, 
dV_g \geq \varepsilon \}}. $$
This 
$S^{(0)}$ may be seen as the set of ``unremovable'' singularities, 
however, 
we have the following for the size of this $S^{(0)}$   
from Proposition \ref{prop:sz} at the moment. 
\begin{Corollary}
$$ \mathcal{H}^{2} (S^{(0)}) =0 . $$
\end{Corollary}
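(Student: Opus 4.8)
The plan is to deduce the Corollary directly from Proposition \ref{prop:sz} together with the definition of $S^{(0)}$. By definition,
$$
S^{(0)} = \Bigl\{ z \in S(\{(A_n,u_n)\}) \,\Bigm|\, \Theta(\mu,z) > 0,\ \lim_{r\to 0+} \frac{1}{r^2}\int_{B_r(z)} |F_A|^2\, dV_g \geq \varepsilon \Bigr\}.
$$
In particular, every point $z \in S^{(0)}$ has the property that $\lim_{r\to 0+} \frac{1}{r^2}\int_{B_r(z)} |F_A|^2\, dV_g$ does \emph{not} vanish (it is bounded below by $\varepsilon > 0$). On the other hand, Proposition \ref{prop:sz} asserts that for $\mathcal{H}^2$-almost every $z \in S$ one has $\lim_{r\to 0+} \frac{1}{r^2}\int_{B_r(z)} |F_A|^2\, dV_g = 0$. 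Since $S^{(0)} \subset S$, the set $S^{(0)}$ is therefore contained in the $\mathcal{H}^2$-null exceptional set of Proposition \ref{prop:sz}, whence $\mathcal{H}^2(S^{(0)}) = 0$.

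More concretely, one can phrase this via the sets $S_j$ used in the proof of Proposition \ref{prop:sz}: choosing $j$ with $j^{-1} < \varepsilon$, the condition $\lim_{r\to 0+} \frac{1}{r^2}\int_{B_r(z)} |F_A|^2\, dV_g \geq \varepsilon$ forces $\varlimsup_{r\to 0+}\int_{B_r(z)}|F_A|^2\, dV_g > j^{-1}$ for all small $r$ (after absorbing the $r^2$ factor, which only helps), so $S^{(0)} \subset S_j$, and $\mathcal{H}^2(S_j) = 0$ was established there. Either formulation gives the result in one line once the definitions are unwound.

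There is essentially no obstacle here: the Corollary is a bookkeeping consequence of the preceding proposition, recording that the ``unremovable'' part $S^{(0)}$ of the singular set — defined precisely by the \emph{failure} of the hypothesis appearing in the removability Theorem \ref{ith:remov} — is exactly the locus excluded by Proposition \ref{prop:sz}, and hence is $\mathcal{H}^2$-negligible. The only minor care needed is to make sure the normalization (the factor $1/r^2$ versus no factor, and $\lim$ versus $\varlimsup$) matches between the definition of $S^{(0)}$ and the statement of Proposition \ref{prop:sz}; since $\frac{1}{r^2}\int_{B_r(z)}|F_A|^2\, dV_g \to 0$ and the integrand is nonnegative, this is immediate.
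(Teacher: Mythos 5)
Correct, and this is precisely the (implicit) reading the paper intends: $S^{(0)}$ is by definition contained in the $\mathcal{H}^2$-null exceptional set of Proposition \ref{prop:sz}, so $\mathcal{H}^2(S^{(0)})=0$ is immediate. One small caution on your ``more concrete'' paragraph: the paper's displayed definition of $S_j$ is missing the $\tfrac{1}{r^2}$ normalization (a typo, restored in the next line, ``for any $z\in S_j$ we have $\frac{1}{r^2}\int_{B_r(z)}|F_A|^2\,dV_g > j^{-1}$''), and your phrase ``absorbing the $r^2$ factor, which only helps'' points the wrong way --- for $r<1$ dividing by $r^2$ only enlarges the quantity, so $\frac{1}{r^2}\int_{B_r(z)}|F_A|^2 \geq \varepsilon$ does not force $\int_{B_r(z)}|F_A|^2 > j^{-1}$; the correct containment $S^{(0)}\subset S_j$ for $j^{-1}<\varepsilon$ holds once $S_j$ is read with the $\tfrac{1}{r^2}$ factor, exactly as the rest of that proof requires.
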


We further expect that $\mathcal{H}^{0} (S^{(0)}) < \infty$, and it is
discrete. 
This will be discussed somewhere else.

\section{Removable singularity}

In this section, we prove the following 
removal singularity theorem for D--T
instantons.

\begin{theorem}[Theorem 1.1]
Let $B_{r} (0) \subset \C^3$ be a ball of radius $r$ centred at the origin with
 K\"{a}hler metric $g$. We assume that the metric is compatible with the
 standard metric by a constant $\Lambda$. 
Let $E$ be a vector bundle over $B_{r} (0)$,  
and let $(A, u)$ be an admissible D--T instanton of $E$  
with $\int_{B_{r} (0)} 
|u|^2 dV < \infty $.   
Then, there exists a constant $\varepsilon > 0$ such that 
if $ \frac{1}{r^2} \int_{B_{r}(0)} |F_{A}|^2  dV_{g} \leq \varepsilon$, 
there exists a smooth gauge transformation $\sigma$ on 
$B_{\frac{r}{2}}(0) \setminus S$ such that $\sigma (A,u)$ 
smoothly extends over $B_{\frac{r}{2}} (0)$
\label{th:remov}
\end{theorem}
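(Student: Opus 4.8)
The plan is to follow the classical Uhlenbeck--Nakajima removable singularity strategy, modified to account for the extra field $u$ exactly as in the curvature estimate of Proposition \ref{th:est}. First I would reduce to the case where the only potential singularity inside $B_{r/2}(0)$ is the origin: since $(A,u)$ is admissible, $S$ has finite $\mathcal{H}^2$-measure, and by the definition of $S^{(2)}$ the hypothesis $\frac{1}{r^2}\int_{B_r(0)}|F_A|^2\,dV_g\le\varepsilon$ together with Proposition \ref{th:est} forces the curvature to be locally bounded away from $S$; a covering argument then shows we may treat the singular set as a point (the general case of a codimension-$\ge 4$ singular set reduces to this by the standard slicing/induction). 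So assume $(A,u)$ is a smooth D--T instanton on $B_r(0)\setminus\{0\}$ with $\int |F_A|^2<\infty$ and $\int |u|^2<\infty$.

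The heart of the argument is a decay estimate for the curvature. Using the monotonicity formula (Proposition \ref{th:monotone}) and Proposition \ref{estu} to control $u$ in $L^\infty$ by $\|u\|_{L^2}$, I would first show that the normalized energy $\rho^{-2}\int_{B_\rho(0)}|F_A|^2\,dV_g$ stays below the threshold $\varepsilon$ for all small $\rho$ (the density $\Theta$ at the origin must vanish because $0\notin S^{(0)}$, using Proposition \ref{prop:sz}-type reasoning and the fact that the $[u,\bar u]^2$ contribution to $m(A,u)$ is controlled by $\|u\|_{L^2}$). Then Proposition \ref{th:est}, applied on balls $B_\rho(x)$ with $x$ near $0$ and radius comparable to $|x|$, yields the pointwise bound $|F_A|(x)\le C\varepsilon^{1/2}|x|^{-2}+C\varepsilon|x|$. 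The next step is the crucial improvement: one bootstraps this into genuine decay $|F_A|(x)\le C|x|^{-2+\delta}$ for some $\delta>0$. This is where one argues by a Morrey-type iteration on the scaled energy, using the monotonicity inequality — the positive term $\int 8\tau^{-3}e^{a\tau^2}\int_{B_\tau}|[u,\bar u]|^2$ and the radial-derivative terms on the right-hand side — to show that the normalized energy decays at a definite rate once it is small. The extra field contributes a term that, after invoking $\|u\|_{L^\infty}\le C\|u\|_{L^2}$ and the finiteness of $\int|u|^2$, is an acceptable lower-order perturbation; this is the only place the argument genuinely diverges from Tian--Yang \cite{TY02} and Nakajima \cite{Nakajima87}, and I expect it to be the main obstacle — one must check that the $[u,\bar u]$ terms with the ``wrong'' sign ($-12|\partial_r\lfloor[u,\bar u]|^2$) in the monotonicity formula do not destroy the decay, which should follow because they are quadratic in $u$ and hence subordinate to the bounded $L^\infty$-norm of $u$ on small balls.

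Once $|F_A|\in L^p_{loc}$ for some $p>3$ near $0$ (which the decay rate $|x|^{-2+\delta}$ gives, since $(2-\delta)p<6$ for $\delta$ small and $p$ close to $3$... more precisely one needs $|F_A|\in L^{3/2+}$ in a neighborhood, or better, to run Uhlenbeck's gauge-fixing), I would apply Uhlenbeck's theorem \cite{Uh1} on punctured balls: there is a gauge $\sigma$ over $B_{r/2}(0)\setminus\{0\}$ in which the connection form $\sigma^*A$ satisfies a Coulomb condition $d^*(\sigma^*A)=0$ and $\|\sigma^*A\|_{L^p}\le C\|F_A\|_{L^p}$ on annuli, with uniform control as the annuli shrink. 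Standard elliptic regularity for the Coulomb system then shows $\sigma^*A$ extends to a connection of class $L^p_1\subset C^0$ across the origin (Nakajima's removal of the point singularity), and simultaneously the equation $\bar\partial_A^* u=0$ together with the $L^\infty$ bound on $u$ and elliptic regularity for $\bar\partial$ shows $u$ extends smoothly as well. Finally, bootstrapping in the extended gauge — using ellipticity of the linearized D--T operator, i.e. $F_A^{0,2}=0$, $\bar\partial_A^* u=0$, and \eqref{DT2} — upgrades the extended $(A,u)$ from $L^p_1$ to $C^\infty$ across $B_{r/2}(0)$, completing the proof. $\square$
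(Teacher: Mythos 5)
Your overall strategy — reduce to an isolated singularity, establish curvature decay $|F_A| = o(|x|^{-2+\alpha})$ so that $F_A\in L^p$ for some $p>3$, then invoke Uhlenbeck's $L^p$ gauge-fixing and elliptic regularity to extend across the puncture — is indeed the skeleton of the paper's argument, and the way you use Propositions \ref{th:monotone}, \ref{estu}, and \ref{th:est} to control $u$ and get an initial bound $|F_A|(x)\le C\varepsilon^{1/2}|x|^{-2}$ matches Lemma \ref{lem:gr2}. However, there are two places where the proposal papers over exactly the steps that carry the mathematical weight.

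First, the reduction from a $2$-dimensional singular set to a point is not ``a covering argument'' or ``standard slicing/induction.'' You cannot shrink a genuinely $\mathcal{H}^2$-positive singular set to a point by covering, and the codimension-four removable-singularity technology for Yang--Mills connections (Tao--Tian, Smith--Uhlenbeck) is a serious theorem in its own right and is not what is needed here. The paper instead uses the complex structure: by the Bando--Siu extension theorem (Theorem \ref{th:BS}), a holomorphic bundle with Hermitian metric, $L^2$ curvature, and locally bounded $\Lambda F$ (which the D--T equation and the $L^\infty$ bound on $u$ supply) extends across the singular set as a \emph{reflexive sheaf} $\mathcal{E}$, and the metric extends smoothly where $\mathcal{E}$ is locally free. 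On a complex threefold a reflexive sheaf is locally free outside a discrete set of points, so the remaining singularities are automatically isolated. This complex-geometric input is essential and is absent from your proposal.

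Second, the decay estimate $|F_A|^2 = o(|x|^{-4+\alpha})$ does not follow from the monotonicity formula by ``a Morrey-type iteration''; the monotonicity formula only gives near-monotonicity, not decay. The engine of the decay in the paper is Uhlenbeck's \emph{broken Hodge gauge} on the dyadic annuli $U_\ell$ (Theorem \ref{th:Uh}): in that gauge one integrates $\int_{U_\ell}|F_A|^2$ by parts, the gauge conditions make the boundary terms telescope, and the resulting inequality $\frac{1}{r^2}\int_{B_r} m(A,u)\le \frac{CK}{r}\int_{\partial B_r}|F_A|^2 + \frac{C}{Kr}\int_{\partial B_r}|F_A^{r\psi}|^2$ is what, combined with the monotonicity formula, yields the difference inequality $\zeta M(\rho/2)\le M(\rho)+C'\varepsilon^2 e^{a\rho^2}\rho^4$ with $\zeta>1$ and hence power decay. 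You mention Uhlenbeck's gauge-fixing only for the final Coulomb/$L^p$-extension step (which should be \cite{Uh2}, not \cite{Uh1}), but the broken Hodge gauge from \cite{Uh1} is a distinct and indispensable ingredient that your sketch omits. Your worry about the ``wrong sign'' term $-12|\partial_r\lfloor[u,\bar u]|^2$ is legitimate but is handled, as you suspect, by $\|u\|_{L^\infty}\le C\|u\|_{L^2}$, exactly as in the paper's estimates \eqref{eq:1-2}--\eqref{eq:1-4}.
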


An immediate corollary of this is the following. 
\begin{Corollary}
Let $(A,u)$ be the limit solution in Theorem \ref{th:weak}, 
and let $S^{(2)}$ be the top stratum of the singular set $S$ defined by \eqref{S2}. 
Then $(A,u)$ extends smoothly across $S^{(2)}$. 
\end{Corollary}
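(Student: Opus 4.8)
The plan is to follow the classical Uhlenbeck removable singularity strategy, adapted to the extra field $u$, exactly in the spirit of Nakajima \cite{Nakajima87}, Uhlenbeck \cite{Uh1}, and Tian--Yang \cite{TY02}. First I would observe that by Proposition \ref{prop:sz} combined with the definition of $S^{(2)}$ in \eqref{S2}, the curvature density $\frac{1}{r^2}\int_{B_r(z)}|F_A|^2\,dV_g$ tends to zero along $S^{(2)}$; moreover, since the top stratum is $2$-rectifiable with a tangent plane $\mathcal{H}^2$-a.e., after a rescaling and a rotation of coordinates we may locally model $S^{(2)}$ on (a relatively closed subset of) a codimension-four affine subspace $\{z_2=z_3=0\}\subset\C^3$, i.e.\ on $\C\times\{0\}$. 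The assertion that $(A,u)$ extends across such a set is then exactly the content of Theorem \ref{th:remov} applied in slices transverse to the tangent plane: on each transverse ball the smallness hypothesis $\frac{1}{r^2}\int_{B_r(0)}|F_A|^2\,dV_g\le\varepsilon$ holds for $r$ small, because the density vanishes; likewise the admissibility hypothesis gives $\int|u|^2<\infty$ there (using Proposition \ref{estu} to control $u$ pointwise once it is in $L^2$).

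Concretely, I would proceed as follows. Step one: reduce to a punctured-ball statement. Since $S\cap K$ has finite $\mathcal{H}^2$-measure and $S^{(0)}$ has $\mathcal{H}^2$-measure zero by the Corollary in Section 2, a Fubini/coarea argument shows that for a dense set of centres $p\in S^{(2)}$ and radii $r$, the transverse sphere of radius $r$ about $p$ meets $S$ only in $S^{(2)}$ near $p$, and the energy on that transverse ball is as small as we like. Step two: apply Theorem \ref{th:remov} to obtain a gauge $\sigma$ in which $(A,u)$ extends smoothly over the transverse ball $B_{r/2}(0)\setminus S$. The only subtlety here is that Theorem \ref{th:remov} as stated removes the full $S$ inside the ball, so near a point of $S^{(2)}$ it removes precisely the piece of $S^{(2)}$ passing through; one must patch these local extensions together. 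Step three: patch. On overlaps of two such transverse balls the two extensions differ by a gauge transformation defined away from a set of codimension $\ge 4$ (hence of codimension $\ge 2$ in the relevant $4$-real-dimensional transverse slices, and codimension $\ge 4$ globally), so Hartogs-type extension together with the removability of the bundle (the second Chern class being representable by an $L^2$ form that integrates to zero across a codimension-$\ge 3$ set, cf.\ Bando--Siu) lets the transition functions extend smoothly; uniqueness of the smooth extension then glues the local gauges into a single smooth gauge transformation $\sigma$ on a neighbourhood of $S^{(2)}$ minus its closure-defect.

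The step I expect to be the genuine obstacle is Theorem \ref{th:remov} itself, and within its proof the treatment of the coupled nonlinearity $[u,\bar u]$ in \eqref{DT2}: one needs an Uhlenbeck-gauge on an annulus, a decay estimate $|F_A|(z)=O(|z|^{-2+\delta})$ or better from the monotonicity formula \eqref{eq:monotone} together with Proposition \ref{th:est}, and then a bootstrap to promote the connection (in, say, Coulomb/radial gauge) to a $W^{1,p}$ and then smooth connection across the origin. The field $u$ enters through the term $+[u,\bar u]$ in \eqref{DT2} and the additional negative terms $-12|\partial_r\!\lfloor[u,\bar u]|^2$ in \eqref{eq:monotone}; these must be absorbed using Proposition \ref{estu}, which bounds $\|u\|_{L^\infty}$ by $\|u\|_{L^2}$ and so makes $[u,\bar u]$ a bounded (indeed smooth, once $A$ is, via $\bar\partial_A^* u=0$ and elliptic regularity for the Dirac-type operator on $u$) perturbation that does not disturb the scaling. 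Granting Theorem \ref{th:remov}, the corollary is then essentially a covering-and-patching exercise; the one place to be careful is that $S^{(2)}$ is only rectifiable, not a submanifold, so the ``tangent plane $\mathcal{H}^2$-a.e.'' structure must be used to handle a set of centres of full measure, and a final measure-zero exceptional subset of $S^{(2)}$ is swept into $S^{(0)}$, which is consistent with the stated conclusion that $(A,u)$ extends across $S^{(2)}$ (the closure of the good set).
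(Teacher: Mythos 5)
The paper treats this as an immediate consequence of Theorem \ref{th:remov} by a simple covering: the set inside the closure in the definition \eqref{S2} consists precisely of those $z$ with $\lim_{r\to 0+} r^{-2}\int_{B_r(z)}|F_A|^2\,dV_g = 0$, so around each such $z$ one picks a $6$-real-dimensional ball $B_r(z)\subset\C^3$ small enough that $r^{-2}\int_{B_r(z)}|F_A|^2\,dV_g\le\varepsilon$, and Theorem \ref{th:remov} then produces a gauge in which $(A,u)$ extends smoothly over $B_{r/2}(z)$ --- note that the theorem, as stated, removes the entire $\mathcal{H}^2$-finite closed set $S\cap B_{r/2}(z)$, not merely an isolated point. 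The reduction to isolated singularities happens \emph{inside} the proof of Theorem \ref{th:remov}, via the Bando--Siu theorem (Theorem \ref{th:BS}), which promotes the bundle to a reflexive sheaf whose non-locally-free locus in complex dimension three is discrete, and it is not needed at the level of the Corollary.

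Your proposal misreads the scope of Theorem \ref{th:remov} and, as a result, introduces machinery that is both unnecessary and not actually available. You propose to model $S^{(2)}$ on $\C\times\{0\}$ and then "apply Theorem \ref{th:remov} in slices transverse to the tangent plane," i.e.\ on $4$-real-dimensional transverse balls, invoking Fubini/coarea to choose good slices and then patching the resulting local gauges. But Theorem \ref{th:remov} is a statement about balls $B_r(0)\subset\C^3$, and the D--T equations \eqref{DT1}--\eqref{DT2} do not restrict to a slice: a D--T instanton on a K\"ahler threefold does not induce a D--T instanton on a complex $2$-dimensional transverse disc, and the scale-invariant quantity $r^{-2}\int_{B_r}|F_A|^2$ is tuned to $6$-balls. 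So Step two as written has no content. The rectifiability of $S^{(2)}$ (from Preiss' theorem) is also a red herring here --- it plays a role elsewhere in the paper but is not used to derive the Corollary, precisely because Theorem \ref{th:remov} does not require the singular set to be flat or even rectifiable, only $\mathcal{H}^2$-finite. Your Step three (patching transition functions across a codimension $\ge 4$ set) would indeed be needed if one only had a pointwise removal statement, but since Theorem \ref{th:remov} already yields a single gauge on a full ball, the patching of two overlapping balls is the standard uniqueness-of-smooth-extension argument and does not require a separate Hartogs/Bando--Siu input at this level. Your final paragraph on the analytic content of Theorem \ref{th:remov} itself --- the broken Hodge gauge, the decay $|F_A|^2(z)=o(|z|^{-4+\alpha})$, the absorption of $[u,\bar u]$ via Proposition \ref{estu} and the monotonicity formula --- is a reasonable sketch of the paper's proof of that theorem, but it addresses the wrong statement.

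Concretely: drop Steps one and three, drop the tangent-plane modelling, and simply observe that by the definition of $S^{(2)}$ every point of its dense core is the centre of some $6$-ball meeting the smallness hypothesis of Theorem \ref{th:remov}; these balls cover $S^{(2)}$ and the theorem finishes the job.
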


\paragraph{Proof of Theorem \ref{th:remov}.}
First, we recall 
the following result by Bando and Siu.  
\begin{theorem}[\cite{BS94}]
 Let $E$ be a holomorphic vector bundle with Hermitian metric $h$ over 
a K\"{a}hler manifold $Z$ (not necessarily compact
 nor complete) outside a closed subset $S$ of $Z$ with locally finite Hausdorff
 measure of real codimension four. 
We assume that its curvature tensor $F$
 is locally square integrable on $Z$.  Then 
\begin{enumerate}
 \item[$(a)$] $E$ extends to the whole $Z$ as a reflexive sheaf
	    $\mathcal{E}$, and for any local section $s \in \Gamma (U ,\mathcal{E})$, 
$\text{log}^{+} h (s,s)$ belongs to $H^{1}_{loc}$. 
\item[$(b)$] If $\Lambda F$ is locally bounded, then $h (s,s)$ is
	      locally bounded, and $h$ belongs to $L^{p}_{2, loc}$ for
	      any finite $p$ where $\mathcal{E}$ is locally-free. 
\end{enumerate}
\label{th:BS}
\end{theorem}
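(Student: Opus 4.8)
\textbf{Proof plan for Theorem \ref{th:remov}.}

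The plan is to follow the strategy of Tian--Yang \cite{TY02}, using Uhlenbeck's gauge-fixing and Nakajima's iteration, while carrying along the extra field $u$ as a lower-order nuisance controlled by Propositions \ref{estu} and \ref{th:est}. First I would reduce to the case of an \emph{isolated} singularity. By the analysis in Section~2, the singular set $S$ meeting $B_{r/2}(0)$ is $2$-rectifiable, and Proposition \ref{prop:sz} together with the hypothesis $\frac{1}{r^2}\int_{B_r(0)}|F_A|^2\,dV_g\le\varepsilon$ forces the ``bad'' stratum $S^{(0)}$ to be $\mathcal{H}^2$-null; thus after shrinking $r$ we may assume $S\cap B_{r/2}(0)\subset S^{(2)}$, i.e. the density $\Theta(\mu,\cdot)$ is positive but the normalized curvature energy tends to $0$ at every point of $S$. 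Then I would invoke Theorem \ref{th:BS}(a): since $F_A$ is locally square integrable and $S$ has locally finite $\mathcal{H}^2$-measure (hence real codimension $\ge 4$ in the relevant sense after slicing, or one first localizes the extension question transverse to $T_sS$), the bundle $E$ with $\dbar_A$ extends as a reflexive sheaf across $S$, which away from a complex-codimension-$\ge 3$ locus is locally free; so up to a holomorphic gauge change the $\dbar_A$-structure already extends, and the remaining issue is purely the extension of the Hermitian metric / connection and of $u$.

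Next I would upgrade this to a quantitative pointwise curvature decay. Fix $s\in S\cap B_{r/4}(0)$; for all small $\rho$ the hypothesis and Proposition \ref{prop:sz} give $\rho^{-2}\int_{B_\rho(s)}|F_A|^2\,dV_g\to 0$, and $\int_{B_\rho(s)}|u|^2\,dV_g\to 0$ since $u\in L^2$. Feeding this into the $\varepsilon$-regularity estimate of Proposition \ref{th:est} (applied at points $z$ near $s$ with radius comparable to $\mathrm{dist}(z,s)$) yields
\begin{equation}
|F_A|(z)\;\le\; \frac{C_1}{d(z)^2}\Bigl(\frac{1}{d(z)^2}\int_{B_{d(z)}(z)}|F_A|^2\,dV_g\Bigr)^{1/2}+C_1\varepsilon\, d(z)\;=\;\frac{o(1)}{d(z)^2},
\end{equation}
where $d(z):=\tfrac12\,\mathrm{dist}(z,S)$ and $o(1)\to 0$ as $z\to S$. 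This is the analogue of the decay $|F_A|=o(d^{-2})$ used by Tian--Yang; it in particular gives $|F_A|\in L^p$ for $p$ slightly below $3$ on a punctured neighbourhood, and together with Proposition \ref{estu} (so $|u|$, hence $|[u,\bar u]|$, is bounded) it shows $\Lambda F_A$ is locally bounded near $S$. Theorem \ref{th:BS}(b) then gives that the Hermitian metric $h$ is locally bounded and lies in $L^p_{2,loc}$ for all finite $p$ where $\mathcal{E}$ is locally free.

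Now the heart of the argument: constructing the gauge $\sigma$ and proving smooth extension. Working in a holomorphic trivialization near an isolated singular point, the connection $A$ is represented by $h^{-1}\partial h$; since $h\in L^p_{2,loc}$ for all $p$, $A\in L^p_{1,loc}$, and one may apply Uhlenbeck's theorem to find a continuous gauge $\sigma$ on a small punctured ball in which $d^*\sigma(A)=0$ with $\|\sigma(A)\|_{L^p_1}$ controlled by $\|F_A\|_{L^{p}}$; here the curvature decay $|F_A|=o(d^{-2})$ is exactly what makes this $L^p$-norm finite and small for $p<3$, so the singularity is ``fillable.'' With the connection in Coulomb gauge and $u$ satisfying $\dbar_A^*u=0$ with $|u|\in L^\infty$, the pair $(\sigma(A),u)$ satisfies the elliptic system \eqref{DT1}--\eqref{DT2} weakly on the full ball (the only new term, $[u,\bar u]$, is bounded, hence harmless in the right-hand side). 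A Nakajima-style bootstrap on the coupled system — Coulomb-gauge elliptic estimates for $A$ feeding into the $\dbar_A^*u=0$ equation for $u$ and back — then upgrades $(\sigma(A),u)$ from $L^p_1\times L^\infty$ to $C^\infty$ across the point. Running this at each isolated singular point and patching yields the smooth gauge $\sigma$ on $B_{r/2}(0)\setminus S$ with $\sigma(A,u)$ extending smoothly over $B_{r/2}(0)$, which is the assertion.

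The main obstacle I expect is the coupled bootstrap rather than either ingredient alone: unlike the pure Hermitian--Einstein case, here the curvature equation \eqref{DT2} has the term $[u,\bar u]$ and $u$ is only known a priori to be bounded, so the first regularity step for $A$ only gives, say, $W^{2,p}$ for finite $p$, and one must then use $\dbar_A^*u=0$ together with $F_A^{0,2}=0$ to gain derivatives on $u$ before returning to improve $A$ — checking that this loop closes (in particular that no derivative is lost across the singular point, where the gauge $\sigma$ is only continuous to begin with) is the delicate point, and is where Uhlenbeck's removable-singularity mechanism \cite{Uh1} combined with the quantitative decay from Proposition \ref{th:est} must be used carefully.
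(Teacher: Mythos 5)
Your proposal does not prove the statement it was meant to address. The statement in question is Theorem \ref{th:BS}, the extension theorem of Bando and Siu, which the paper itself does not prove: it is quoted from \cite{BS94} and used as a black box inside the proof of Theorem \ref{th:remov}. What you have written is explicitly a proof plan for Theorem \ref{th:remov}, and it invokes both parts (a) and (b) of Theorem \ref{th:BS} as essential ingredients; as an argument for Theorem \ref{th:BS} it is therefore circular and never engages with what actually has to be shown. A proof of Theorem \ref{th:BS} is of a quite different nature: one must extend the holomorphic bundle across a closed set of locally finite Hausdorff measure of real codimension four as a coherent \emph{reflexive} sheaf (a Siu/Hartogs-type extension argument for coherent analytic sheaves, using only $F\in L^2_{loc}$), and one must establish the metric estimates --- $\log^{+}h(s,s)\in H^{1}_{loc}$ in general, and local boundedness of $h(s,s)$ together with $L^{p}_{2,loc}$ regularity of $h$ when $\Lambda F$ is locally bounded --- via subharmonicity/Bochner arguments and, in \cite{BS94}, the Hermitian--Einstein heat flow. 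None of these ideas appears in your sketch.

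Even read on its own terms as a plan for Theorem \ref{th:remov} (where the paper does give a proof), there is a concrete gap at the decay step. The paper's route is: Lemma \ref{lem:gr2} gives $|z|^4|F_A|^2\le C\varepsilon$ from the monotonicity formula, Proposition \ref{estu} and Proposition \ref{th:est}; the essential work is then the \emph{improvement} to $|F_A|^2=o(|z|^{-4+\alpha})$ in Lemma \ref{lem:pa}, obtained through Uhlenbeck's broken Hodge gauge (Theorem \ref{th:Uh}) and an iteration of a differential inequality for $M(r)$, and only this yields $F_A\in L^{p}$ for some $p>3$, the threshold needed to apply \cite[Th.~2.1]{Uh2} in real dimension six. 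Your substitute, $|F_A|=o(1)\,d^{-2}$, is not justified: Proposition \ref{prop:sz} is an $\mathcal{H}^2$-a.e. statement along the singular set of a limiting \emph{sequence}, not a hypothesis available at an isolated singular point of a single admissible instanton, so Proposition \ref{th:est} alone gives only $|F_A|\le C\sqrt{\varepsilon}\,d^{-2}$, hence $F_A\in L^{p}$ only for $p<3$ --- which, as you acknowledge, is below the threshold, and the removability mechanism does not close from there.
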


Combining Theorem \ref{th:BS} with standard elliptic theory, 
we deduce that 
the limiting D--T instanton in Theorem \ref{th:weak} extends 
smoothly on the locally-free part of a reflexive sheaf over $Z$. 
Thus we consider removing isolated singularities.

As the condition $r^{-2} \int_{B_{r} (0)} | F_{A} |^2 \, dV \leq
\varepsilon$ is scale-invariant, we  
assume that $r=1$ below. 
We prove the following in the rest of this section. 
\begin{proposition}
Let $B \subset \C^3$ be the unit ball centred at the origin with
 K\"{a}hler metric $g$. We assume that the metric is compatible with the
 standard metric by a constant $\Lambda$. 
Let $E$ be a vector bundle over $B \setminus \{ 0 \}$,  
and let $(A, u)$ be a D--T instanton of $E$  
with $\int_{B} 
|u|^2 dV < \infty $. 
Then, there exists a constant $\varepsilon > 0$ such that 
if $\int_{B} |F_{A}|^2  dV_{g} \leq  \varepsilon $, 
there exists a smooth gauge transformation $\sigma$ on $B \setminus \{ 0 \}$
 such that 
$\sigma (A,u)$ smoothly extends to a
 smooth D--T instanton over $B$.  
\end{proposition}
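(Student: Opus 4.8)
The plan is to follow the classical Uhlenbeck–Nakajima strategy for removing an isolated singularity of a Yang–Mills-type connection, adapted to carry along the extra field $u$. The key analytic input is the pointwise curvature estimate of Proposition \ref{th:est}: since $(A,u)$ is admissible, $\int_B|F_A|^2\,dV<\infty$ and $\int_B|u|^2\,dV<\infty$, so by first shrinking $B$ we may arrange that on every small ball $B_\rho(z)$ with $z$ near $0$ the two smallness hypotheses $\rho^{-2}\int_{B_\rho(z)}|F_A|^2\le\varepsilon$ and $\int_{B_\rho(z)}|u|^2\le\varepsilon$ hold (using absolute continuity of the integral for $|F_A|^2$, and Proposition \ref{estu} plus the volume factor for $|u|^2$). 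Applying Proposition \ref{th:est} at a point $z$ with $|z|=\rho$ and with radius comparable to $\rho$ then gives the decay
\begin{equation}
|F_A|(z)\le \frac{C_1}{\rho^2}\Bigl(\rho^{-2}\!\!\int_{B_{c\rho}(z)}\!\!|F_A|^2\,dV\Bigr)^{1/2}+C_1\varepsilon\rho
\le \frac{C}{\rho^2}\,\eta(\rho)+C\varepsilon\rho,
\end{equation}
where $\eta(\rho):=\bigl(\rho^{-2}\int_{B_{c\rho}(0)}|F_A|^2\,dV\bigr)^{1/2}\to 0$ as $\rho\to0$ by Proposition \ref{prop:sz} (here $0\in S$ is exactly a point at which the normalized curvature energy tends to zero, which is the case relevant to $S^{(2)}$). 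Thus $|F_A|(z)=o(|z|^{-2})$ near the origin, and similarly $|u|(z)=o(1)$ is bounded by Proposition \ref{estu}.

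Next I would upgrade this curvature decay to the existence of a good gauge. Over an annulus $A_\rho:=B_{2\rho}\setminus B_{\rho/2}$, rescale to unit size; the rescaled curvature has $L^2$-norm $\le\varepsilon$ on a fixed annulus, so Uhlenbeck's gauge-fixing theorem (on the annulus, or on $S^5\times$interval) produces a gauge $\sigma_\rho$ in which $d^*A=0$ and $\|A\|_{L^p_1}\le C\|F_A\|_{L^p}$ on the rescaled annulus. Patching these over a dyadic sequence of annuli (the standard Uhlenbeck patching argument, using that $F_A$, hence the transition between consecutive gauges, is controlled) yields a single smooth gauge $\sigma$ on a punctured ball $B_{r_0}\setminus\{0\}$ in which the connection matrix satisfies, after unwinding the rescalings, $|A|(z)\le C|z|^{-1}\eta(|z|)+C\varepsilon|z|=o(|z|^{-1})$ and correspondingly $|\nabla A|(z)=o(|z|^{-2})$. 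In particular $A\in L^3_{\mathrm{loc}}$ near $0$ (the integrability $\int_{B}|z|^{-3}\eta(|z|)^3\,dV<\infty$ follows from $\eta\to0$ and the fact that $|z|^{-3}$ is exactly the borderline weight in $\C^3$), and in fact $A\in W^{1,p}_{\mathrm{loc}}$ for some $p>3$ after a further estimate, so by Uhlenbeck's removable singularity argument $A$ extends to a $W^{1,p}$ connection across $0$. The field $u$ is uniformly bounded by Proposition \ref{estu}, hence in $L^\infty$ across the puncture, and in the Coulomb gauge the equations \eqref{DT1}–\eqref{DT2} become an elliptic system for $(A,u)$ with the singular set $\{0\}$ of codimension six — since a bounded $W^{1,p}$ (resp. $L^\infty$) solution to an elliptic equation across a set of codimension $\ge 3$ (capacity zero for $W^{1,p}$, $p$ close to $3$ and in any case for $p$ not too large) is a weak solution on the whole ball, $(A,u)$ extends as a weak solution over $B_{r_0}$. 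Standard elliptic bootstrapping on the Coulomb-gauge system (as in Bando–Siu, Theorem \ref{th:BS}, combined with elliptic regularity for the coupled system, the term $[u,\bar u]$ being handled as a lower-order nonlinearity since $u\in L^\infty$ and $\bar\partial_A^*u=0$) then promotes the weak solution to a smooth D–T instanton. Rescaling back to radius $r$ and restricting to $B_{r/2}$ gives the statement; the reflexive-sheaf extension of Bando–Siu reduces the general admissible case to this isolated-singularity case on the locally-free locus.

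The main obstacle is the gauge-patching step producing a global Coulomb gauge on the punctured ball with the quantitative decay $|A|=o(|z|^{-1})$: this is where the extra field $u$ enters, because Uhlenbeck's annulus gauge-fixing and the subsequent curvature estimate both rely on the analogue of Proposition \ref{th:est}, whose proof in \cite{tanaka2} already had to absorb the nonlinear term $[u,\bar u]$ via the monotonicity formula of Proposition \ref{th:monotone}. Concretely, one must check that the $u$-contribution to the monotonicity quantity $m(A,u)=|F_A^\perp|^2-|[u,\bar u]|^2$ does not spoil the positivity needed to run the iteration, and that the term $\int|[u,\bar u]|^2$ decays fast enough along the annuli — this is exactly what the $\int_\sigma^\rho 8\tau^{-3}e^{a\tau^2}\int_{B_\tau}|[u,\bar u]|^2$ term in \eqref{eq:monotone} controls, so the argument closes, but the bookkeeping of the $u$-dependent error terms through the rescaling and patching is the delicate part. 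A secondary technical point is verifying that the exponent obtained for $A$ is strictly better than the $L^3$ borderline (needed for Uhlenbeck's removable-singularity theorem in real dimension six), which again follows from $\eta(\rho)\to0$ but requires a short argument to convert the $o$-estimate into a definite $W^{1,p}$, $p>3$, bound, e.g. via a Morrey-type iteration on the Coulomb-gauge equation.
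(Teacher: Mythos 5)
The overall strategy you describe (Uhlenbeck--Nakajima adapted to carry the extra field $u$, followed by Uhlenbeck's $L^p$ gauge theorem, with Bando--Siu handling the locally-free part) is exactly the paper's, but there is a genuine gap at the crucial quantitative step. You claim that
\[
\eta(\rho):=\Bigl(\rho^{-2}\int_{B_{c\rho}(0)}|F_A|^2\,dV\Bigr)^{1/2}\longrightarrow 0 \quad\text{as } \rho\to 0
\]
``by Proposition \ref{prop:sz}'', and the rest of your argument (the $o(|z|^{-2})$ decay for $|F_A|$, the $o(|z|^{-1})$ decay for $A$ in Coulomb gauge, the upgrade to $W^{1,p}$ with $p>3$) is bootstrapped from this. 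But Proposition \ref{prop:sz} is a statement about $\mathcal{H}^2$-almost every point of the singular set $S$ of a weakly convergent \emph{sequence} of D--T instantons, not a property of an arbitrary D--T instanton on a punctured ball with small total energy. At a single isolated point $\{0\}$, ``$\mathcal{H}^2$-a.e.'' is vacuous, so Proposition \ref{prop:sz} gives no information whatever about whether $\rho^{-2}\int_{B_\rho(0)}|F_A|^2\to 0$. And in the proposition you are trying to prove, the only hypothesis is $\int_B|F_A|^2\le\varepsilon$; there is no a priori vanishing of the normalized energy at the origin. So the decay $\eta(\rho)\to 0$ is precisely what needs to be \emph{proved}, and you have assumed it.

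The paper closes this gap with a substantial additional argument that your sketch omits. From the monotonicity formula together with Propositions \ref{estu} and \ref{th:est}, one only gets the scale-invariant bound $|z|^4|F_A|^2(z)\le C\varepsilon$ (Lemma \ref{lem:gr2}) --- an $O(|z|^{-2})$ bound on $|F_A|$, not $o(|z|^{-2})$. This bound is just strong enough to invoke Uhlenbeck's \emph{broken Hodge gauge} on the dyadic annuli $U_\ell$ (Theorem \ref{th:Uh}). One then integrates $\langle F_A, D^*F_A\rangle$ by parts on each annulus, uses the D--T equation $D^*F_A = *D(\Lambda[u,\bar u])$ to convert the bulk term to a $u$-term, absorbs the quadratic term $\langle F_A,[A,A]\rangle$ using property (e) of the broken gauge, and sums over $\ell$ to obtain a differential/difference inequality for $M(\rho):=e^{a\rho^2}\rho^{-2}\int_{B_\rho}m(A,u)$. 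Iterating this inequality yields the polynomial improvement $M(\rho)\le C\rho^\alpha+C''\varepsilon^2\rho^4$ with $\alpha=\log_2\zeta>0$, and hence $|F_A|^2(z)=o(|z|^{-4+\alpha})$. This iteration (not a soft density fact) is what pushes $F_A$ past the $L^3$ borderline into $L^p$ for some $p>3$ and makes Uhlenbeck's removable singularity theorem applicable. You correctly flag the gauge-patching/iteration step as ``the delicate part,'' but in the end you substitute the unjustified $\eta(\rho)\to 0$ for the actual iteration that supplies the decay rate.
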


\begin{proof}
We follow  a proof by Nakajima \cite{Nakajima87} 
(see also Uhlenbeck \cite{Uh1}) for
 Yang--Mills connections,  except 
that we deal with an additional nonlinear
 term coming from the extra field $u$.   
First, we prove the following. 
\begin{lemma}
Let $(A, u)$ be a D--T instanton of a vector bundle $E$ 
over $B \setminus \{ 0 \}$ with $\int_{B \setminus \{ 0 \}} 
|u|^2 dV < \infty $.  
Then, there exist constants $\varepsilon > 0$ and $C >0$ such that 
if $\int_{B} |F_{A}|^2  d V_{g} \leq \varepsilon $, then  
$|z|^4 \left| F_{A}  \right|^{2} (z) \leq 
C \varepsilon$ 
for $z \in B_{\frac{1}{2}}(0) \setminus \{ 0 \}$. 
\label{lem:gr2}
\end{lemma}

\begin{proof}
By the monotonicity formula \eqref{eq:monotone} and Proposition
 \ref{estu}, 
we have 
\begin{equation*}
\begin{split}
\frac{1}{|z|^2} \int_{B_{|z|} (z)} |F_{A}|^2 \, dV 
 &= \frac{1}{|z|^2} \int_{B_{|z|} (z)} m (A,u) \, d V  
 + \frac{1}{|z|^2} \int_{B_{|z|}(z)} | [ u , \bar{u}] |^2 \, dV\\ 
 &\leq \frac{1}{|z|^2} \int_{B_{2|z|} (0)} m (A,u) \, d V  
 + \frac{1}{|z|^2} \int_{B_{2|z|}(0)} | [ u , \bar{u}] |^2 \, dV\\
 &\leq \frac{C}{|z|^2} \int_{B} m (A,u) \, d V  
 + \frac{1}{|z|^2} \int_{B_{2|z|}(0)} | [ u , \bar{u}] |^2 \, dV\\
 &\leq C \varepsilon
\end{split}
\end{equation*}
for $z \in B_{\frac{1}{2}} \setminus \{ 0 \}$. 
Hence, 
using Proposition \ref{th:est}, we obtain 
\begin{equation*}
\left| F_{A} \right|^2 (z) 
\leq \frac{C_1^2}{|z|^6} \int_{B_{|z|} (z)} |F_{A}|^2 \, dV 
 + C_1^2 \varepsilon |z|^2   
\leq \frac{C \varepsilon}{|z|^4}   
\end{equation*} 
for sufficiently small $\varepsilon >0$.
\end{proof}

Then Lemma \ref{lem:gr2} enables us to use a particular choice of gauge, 
which is vital for removing the singularities. 
Let $U_{\ell} := \{ z \in B \, : \, 
2^{- \ell -1} \leq |z| \leq 2^{- \ell} \}$, and  
$S_{\ell} := \{ z \in B \, : \, 
|z| = 2^{- \ell} \}$  for each $\ell = 1 , 2, \dots$ 
so that $B_{\frac{1}{2}} \setminus \{ 0 \} = \cup_{\ell} U_{\ell}$. 
We recall the following from \cite{Uh2}.

\begin{definition}[\cite{Uh1} pp. 25]
A {\it broken Hodge gauge} for a connection $D$  
of a bundle $E$ over $B_{\frac{1}{2}} \setminus \{ 0\} = 
\cup_{\ell =1}^{\infty} U_{\ell}$ is a gauge 
related continuously to the original gauge in which 
$D= d + A$ and $A_{\ell}:= A|_{U_{\ell}}$ 
satisfies the following for all $\ell \geq 1$.  
(a) $d^{*} A_{\ell} = 0 $ in $U_{\ell}$, 
(b) $A_{\ell}^{\psi} |_{S_{\ell}} =
A_{\ell -1}^{\psi} |_{S_{\ell}}$, 
(c) $d^{*}_{\psi} A_{\ell}^{\psi} = 0$ on 
$S_{\ell}$ and $S_{\ell +1}$, and 
(d) $\int_{S_{\ell}} A_{\ell}^{r} 
= \int_{S_{\ell +1}} A_{\ell}^{r} = 0$, 
where the superscripts $r$ and $\psi$ indicate the radial and spherical
 components respectively. 
\end{definition}

Uhlenbeck proved the following existence result of the broken Hodge gauges on
 $\cup_{\ell} U_{\ell}$. 
\begin{theorem}[\cite{Uh1} Theorem 4.6]
There exists a constant $\gamma' > 0$ such that if $D$ is a smooth
 connection on $B_{\frac{1}{2}} \setminus \{0\}$, and the growth of the curvature
 satisfies $|x|^2 |F(x)| \leq \gamma \leq \gamma'$, 
then there exists a broken Hodge gauge in $B_{\frac{1}{2}} \setminus \{ 0 \}$ 
satisfying (e) 
$|A_{\ell} | (x) \leq C 2^{- \ell} || F_{A_{\ell}} ||_{L^{\infty}} 
\leq C 2^{\ell +1} \gamma $, 
and (f) 
$(\lambda - C \gamma ) \int_{U_{\ell}} |A_{\ell} |^2 dV \leq 
2^{-2 \ell} \int_{U_{\ell}} |F_{A_{\ell}} |^2 dV$, 
where $\lambda >0$ is a constant. 
\label{th:Uh}
\end{theorem}

Using this broken Hodge gauge, we deduce the following. 
\begin{lemma}
There exists a constant $\varepsilon> 0$ such that if 
$(A,u)$ is a D--T instanton of a bundle $E$ over $B \setminus \{ 0 \}$
 with $\int_{B} |F_{A} |^2 dV \leq \varepsilon$, 
then $|F_{A} |^2 (z) = o ( |z|^{-4 + \alpha} )$ for some $\alpha >0$ 
for all $z \in B_{\frac{1}{2}} (0)\setminus \{ 0 \}$. 
\label{lem:pa} 
\end{lemma}

\begin{proof}
By integration by parts, we get 
\begin{equation}
\begin{split}
\int_{U_{\ell}} |F_{A_{\ell}}|^2 dV_g 
 &= \int_{U_{\ell}} \langle  F_{A_{\ell}} , D A_{\ell} \rangle 
   - \frac{1}{2} \int_{U_{\ell}} \langle F_{A_{\ell}} , [ A_{\ell} ,
 A_{\ell}] \rangle \\
 &= \int_{U_{\ell}} \langle D^{*} F_{A_{\ell}} , A_{\ell} \rangle 
   - \frac{1}{2} \int_{U_{\ell}} \langle F_{A_{\ell}} , [ A_{\ell} ,
 A_{\ell}] \rangle \\
 & \qquad + \left( \int_{S_{\ell -1}} - \int_{S_{\ell}} \right) 
   \langle A_{\ell}^{\psi} \wedge F_{A_{\ell}}^{r \psi} \rangle .  \\
\end{split}
\label{eq:1}
\end{equation}
Firstly, we estimate the first and second terms 
in the last line of \eqref{eq:1}.  
By using the equations \eqref{DT1}, \eqref{DT2}, 
the H\"{a}lder inequality, 
 and (e) in Theorem \ref{th:Uh}, 
the first term in the last line of \eqref{eq:1} becomes 
\begin{equation}
 \begin{split}
\int_{U_{\ell}} \langle D^{*} F_{A_{\ell}} 
 ,  A_{\ell} \rangle 
 &= \int_{U_{\ell}} \langle * D ( \Lambda [ u , \bar{u} ])  , A_{\ell} \rangle
  \\ 
 &\leq \left( \int_{U_{\ell}} |D ( \Lambda [ u , \bar{u}])|^2 
dV_{g} \right)^{\frac{1}{2}}
  \left( \int_{U_{\ell}} | A_{\ell} |^2 dV_{g} \right)^{\frac{1}{2}} \\ 
 &\leq  \varepsilon C \left( \int_{U_{\ell}} | [u , \bar{u} ] |^2 \,
  dV_{g}  \right)^{\frac{1}{2}} ,\\   
 \end{split}
\label{eq:1-2}
\end{equation}
where $\Lambda = (\omega \wedge )^{*}$. 
For the second term in the last line of \eqref{eq:1}, 
we again use the H\"{a}lder inequality and (e) in Theorem
 \ref{th:Uh}.  
Then it becomes 
\begin{equation}
 \begin{split} 
 \frac{1}{2} \int_{U_{\ell}} 
   \langle F_{A_{\ell}} , [A_{\ell} , A_{\ell}] \rangle
   & \leq \frac{1}{2} \left( \int_{U_{\ell}} |F_{A_{\ell}} |^2
       \right)^{\frac{1}{2}} 
     \left( \int_{U_{\ell}} | A_{\ell} |^4 \right)^{\frac{1}{2}} \\
   & \leq \frac{1}{2} \left( \int_{U_{\ell}} |F_{A_{\ell}} |^2
       \right)^{\frac{1}{2}} 
     \left( C \varepsilon^2  
  \int_{U_{\ell}} | F_{A_{\ell}} |^2 \right)^{\frac{1}{2}}  \\ 
   & \leq C' \varepsilon \int_{U_{\ell}} |F_{A_{\ell}}|^2  . \\
 \end{split}
\label{eq:1-3}
\end{equation}
Thus, from \eqref{eq:1}, \eqref{eq:1-2}, and \eqref{eq:1-3}, 
we get 
\begin{equation}
 (1  - \varepsilon C') \int_{U_{\ell}} |F_{A}|^2  
    - \varepsilon C \int_{U_{\ell}} | [u, \bar{u} ]|^2 
 \leq \left( \int_{S_{\ell -1}} - \int_{S_{\ell}} \right) 
    \langle A^{\psi}_{1} , F_{A_1}^{r \psi} \rangle d \sigma . 
\label{eq:5}
\end{equation}
Hence, taking $\varepsilon$ small,  and summing up \eqref{eq:5} in
 $\ell$, 
we obtain 
\begin{equation}
 \int_{B_{\frac{1}{2}}} |F_{A}|^2  
    - \int_{B_{\frac{1}{2}}} | [u, \bar{u} ]|^2 
 \leq 2  \int_{S_{1}} 
    \langle A^{\psi}_{1} , F_{A_1}^{r \psi} \rangle d \sigma . 
\label{eq:1-4}
\end{equation}
We next estimate the right-hand side of \eqref{eq:1-4}. 
Firstly, we have  
$$ \int_{S_{1}}
    \langle A^{\psi}_{1} , F_{A_1}^{r \psi} \rangle d\sigma 
\leq K \int_{S_{1}} | A^{\psi}_{1} |^2 \, d \sigma  
 + \frac{1}{K} \int_{S_1} |F_{A_1}^{r \psi} |^2 \, d \sigma , $$
where $K>0$ is a constant determined later. 
By using an estimate in \cite[Th.~2.5]{Uh1}, 
the first term of the right-hand side of the above inequality is
 bounded as follows.  
$$ \int_{S_1} |A_{1}^{\psi} |^2  \, d \sigma 
 \leq C \int_{S_1} |F_{A_1}^{\psi \psi} |^2 \, d \sigma \leq C  \int_{S_1} 
 | F_{A_1} |^2 \, d \sigma . $$
Hence, by the rescaling $y = z /r$, we get 
\begin{equation}
 \frac{1}{r^2} \int_{B_{r}} m (A, u) 
\leq \frac{C K}{r} \int_{\partial B_{r}} |F_{A}|^2 d \sigma   
 + \frac{C}{K r} \int_{\partial B_{r}} | F_{A}^{r \psi}|^2 d \sigma . 
\label{eq:2}
\end{equation}
We then put $M(r) := e^{a r^2} r^{-2} \int_{B_{r}} m (A,u) \, dV_{g}$. 
Multiplying $e^{a \rho^2}$ to \eqref{eq:2}, 
and integrating it from $\rho/2$ to $\rho$, 
we obtain  
\begin{equation}
\begin{split}
\int_{\frac{\rho}{2}}^{\rho} 
M(r) \, dr 
&\leq C K e^{a \rho^2} \rho^{-1} \int_{B_{\rho}} |F_{A}|^2 \, dV_{g} \\
 &\qquad \qquad + C K^{-1} \int_{\frac{\rho}{2}}^{\rho} 
 e^{a r^2} r^{-1} \int_{\partial B_{r}} 
   |F_{A}^{r \psi} |^2 \, d\sigma d r . \\
\end{split}
\label{eq:3}
\end{equation}
From the monotonicity formula \eqref{eq:monotone}, we deduce that 
the left-hand side of \eqref{eq:3} is bounded by 
$\frac{\rho}{2} M\left( \frac{\rho}{2}\right)$ from below. 
On the other hand, the second term of the right-hand side of \eqref{eq:3} is estimated as
follows. 
\begin{equation*}
\begin{split}
C K^{-1} \int_{\frac{\rho}{2}}^{\rho} 
 & e^{a r^2} r^{-1} \int_{\partial B_{r}} 
   |F_{A}^{r \psi} |^2 \, d\sigma d r \\
& \leq C K^{-1} \rho \left( e^{a \rho^2} \rho^{-2} 
 \int_{B_{\rho}} |F_{A} |^2 \, dV_{g} - 2 e^{a \rho^2 /4 } \rho^{-2}
 \int_{B_{\frac{\rho}{2}}}
 |F_{A} |^2 \, dV_{g} \right) \\
&\leq CK^{-1} \rho 
\left( M (\rho) -  M ( \rho /2) \right) 
 + C K^{-1} e^{a \rho^2} \rho \left( 
\rho^{-2} \int_{B_{\rho}} | [ u , \bar{u}] |^2 \right) . \\  
\end{split}
\end{equation*}
Hence we get 
$$ \left( \frac{1}{2} + C K^{-1} 
\right) M ( \rho / 2) 
\leq \left( C K + C K^{-1} \right) 
M ( \rho) + C' \varepsilon^2 
e^{a \rho^2} \rho^{4} . $$
Thus, by taking $K$ small, we obtain 
$ \zeta M ( \rho /2 ) \leq 
M (\rho ) + C' \varepsilon^2 e^{a \rho^2} \rho^{4} $  
for some $\zeta > 1$. 
Hence, by iteration, we obtain   
$$ M ( 2^{-\ell}) \leq \left( 2^{-\ell} \right)^{\log_{2} \zeta} M ( 1/ 2) 
   + C'' \varepsilon^2 \left( 2^{- \ell} \right)^4 . $$
Therefore,  
\begin{equation}
M ( \rho ) \leq C \rho^{\alpha} \int_{B} m(A,u) \, dV_{g}  
+ C'' \varepsilon^2 \rho^{4}, 
\label{eq:4}
\end{equation}
where $\alpha = \log_{2} \zeta$.

Hence, form Proposition \ref{th:est} and  \eqref{eq:4}, we get 
\begin{equation*}
\begin{split}
|F_{A}|^2  (z)
& \leq \frac{C}{|z|^4} \left( \frac{1}{|z|^2} \int_{B_{2|z|}} 
|F_{A}|^2 \, dV_{g} \right) 
 + C \varepsilon^{2} |z|^2 \\
& \leq  C' |z|^{-4 + \alpha} \int_{B} |F_{A}|^2 \, dV_{g} 
   + C'' \varepsilon^2 +  C \varepsilon^2 |z|^2 .
\end{split} 
\end{equation*}
Thus, Lemma \ref{lem:pa} is proved. 
\end{proof}

From Lemma \ref{lem:pa}, we deduce $F_{A} \in L^{p}$ for some $p >3$. 
Hence, a theorem by Uhlenbeck \cite[Th.~2.1]{Uh2} (see also 
 \cite[Chap.~6]{KW}) tells us that there is a gauge
 transformation $\sigma \in L^{p}_{2}$ such that $\sigma (A,u)$ smoothly
 extends over $B$. 
As $L_{2}^{p} \subset C^0$ for $p>3$, and $A$ and $\sigma (A)$ are
 smooth on $B \setminus \{ 0 \}$, 
we then realise that $\sigma$ is also smooth on $B \setminus \{ 0 \}$. 
\end{proof}


\addcontentsline{toc}{chapter}{Bibliography}

\begin{flushleft}
E-mail: tanaka.yuuji@math.nagoya-u.ac.jp
\end{flushleft}

\end{document}